\title[Consecutive prime biases and sawtooth random variables]{The distribution of consecutive prime biases and sums of sawtooth random variables}
\author{Robert J. Lemke Oliver}
\address{Department of Mathematics, Tufts University}
\email{robert.lemke\_oliver@tufts.edu}
\author{Kannan Soundararajan}
\address{Department of Mathematics, Stanford University}
\email{ksound@stanford.edu}
\thanks{Robert Lemke Oliver was partially supported by NSF grant DMS-1601398.  Kannan Soundararajan was partially supported by NSF grant DMS-1500237 and by a Simons Investigator grant from the Simons Foundation.}
\numberwithin{equation}{section}
\newtheorem{theorem}{Theorem}
\newtheorem{lemma}[theorem]{Lemma}
\newtheorem{proposition}[theorem]{Proposition}
\numberwithin{theorem}{section}
\theoremstyle{remark} 
\theoremstyle{remark} 
\theoremstyle{remark} 
\renewcommand{\pmod}[1]{\,\left(\text{mod }#1\right)}
\begin{document}

\begin{abstract}
In recent work, we considered the frequencies of patterns of consecutive primes $\pmod{q}$ and numerically found biases toward certain patterns and against others.  We made a conjecture explaining these biases, the dominant factor in which permits an easy description but fails to distinguish many patterns that have seemingly very different frequencies.  There was a secondary factor in our conjecture accounting for this additional variation, but it was given only by a complicated expression whose distribution was not easily understood.  Here, we study this term, which proves to be connected to both the Fourier transform of classical Dedekind sums and the error term in the asymptotic formula for the sum of $\phi(n)$.
\end{abstract}

\maketitle

\section{Introduction} 

\noindent  Let $p_n$ denote the sequence of primes in ascending order.   Given $q\ge 3$ and $\mathbf{a}=(a_1,\dots,a_r)$ satisfying $(a_i,q)=1$ for all $1\leq i \leq r$, in recent work 
\cite{LOS} we studied biases in the occurrence of the pattern $\mathbf{a}$ in strings of $r$ 
consecutive primes reduced $\pmod q$.   Thus, we defined 
\[
\pi(x;q,\mathbf{a}) := \#\{p_n\leq x: p_{n+i-1} \equiv a_i \pmod{q} \text{ for } 1\leq i \leq r\},
\]
and conjectured  that   
\begin{equation} \label{eqn:bias}
\pi(x;q,\mathbf{a}) = \frac{\mathrm{li}(x)}{\phi(q)^r} \Big(1 + c_1(q;\mathbf{a}) \frac{\log\log x}{\log x} + c_2(q;\mathbf{a}) \frac{1}{\log x} + O((\log x)^{-7/4})\Big),
\end{equation}
where $c_1(q;\mathbf{a})$ and $c_2(q;\mathbf{a})$ are certain explicit constants.   The term $c_1(q;\mathbf{a})$ is easily described,  
\[
c_1(q;\mathbf{a}) = \frac{\phi(q)}{2}\Big(\frac{r-1}{\phi(q)} - \#\{i\leq r-1: a_i \equiv a_{i+1} \pmod{q}\}\Big),
\]
and it acts as a bias against immediate repetitions in the pattern $\mathbf{a}$.  The term $c_2(q;\mathbf{a})$ is more complicated, and 
the goal of this paper is to understand its distribution.   If $r\ge 3$ then 
\[
c_2(q;\mathbf{a}) = \sum_{i=1}^{r-1} c_2(q;(a_i,a_{i+1})) + \frac{\phi(q)}{2}\sum_{j=1}^{r-2} \frac{1}{j}\Big(\frac{r-1-j}{\phi(q)} - \#\{i : a_i \equiv a_{i+j+1} \pmod{q}\}\Big),
\]
so that it is sufficient to understand the case $r=2$; that is, $c_2(q;(a,b))$ with $(a,q)=(b,q)=1$.

For the sake of simplicity, we shall confine ourselves to the case when $q$ is prime.   For any character $\chi \pmod q$ we define 
\begin{equation} \label{Adef}
A_{q,\chi} = \prod_{p\nmid q} \Big(1-\frac{(1-\chi(p))^2}{(p-1)^2}\Big).
\end{equation}   
Then the quantity $c_2(q;(a,b))$ is given by 
$$ 
c_2(q;(a,a))  = \frac{q-2}{2} \log (q/2\pi), 
$$ 
and when $a\not \equiv b \pmod q$ by 
\begin{equation} \label{eqn:non-diag}
c_2(q;(a,b)) = \frac{1}{2} \log \frac{2\pi}{q} + \frac{q}{\phi(q)} \sum_{\chi \neq \chi_0\pmod{q}} \Big( \overline{\chi}(b-a) +
\frac{1}{\phi(q)} (\overline{\chi}(b)-\overline{\chi}(a)) \Big) L(0,\chi)L(1,\chi) A_{q,\chi}. 
\end{equation}
The diagonal term $c_2(q;(a,a))$ is thus completely explicit, and of size $q\log q$.   Our work here shows that the off-diagonal terms $c_2(q;(a,b))$ can also be large; usually they are of size about $q$, occasionally getting to size $q\log \log q$ (attaining both positive and negative values),  which we believe is their maximal size.

Before stating our result, we make one more simplification.   Define 
\begin{equation} 
\label{Cdef} 
C(k) =C(k;q)= \frac{1}{\phi(q)} \sum_{\chi\neq \chi_0 \pmod q} \overline{\chi(k)} L(0,\chi)L(1,\chi) A_{q,\chi}. 
\end{equation}  
Since $A_{q,\chi} \ll 1$, $L(1,\chi) \ll \log q$, and (upon using the functional equation) $L(0,\chi) \ll \sqrt{q}\log q$, from \eqref{eqn:non-diag} it follows that for $a\not\equiv b\pmod q$ 
\begin{equation} 
\label{1.5}
\frac{c_2(q;(a,b))}{q} = C(b-a) + O\Big( \frac{(\log q)^2}{\sqrt{q}} \Big). 
\end{equation} 
Thus for large $q$ it is enough to understand the distribution of $C(k)$ as $k$ varies over all non-zero residue classes 
$\pmod q$.  Since $L(0,\chi) = 0$ for even characters $\chi$, in \eqref{Cdef} only odd characters $\chi$ make a contribution, and therefore $C(k) = - C(-k)$ is an odd function of $k$.  

\begin{theorem} \label{thm1}   (1) As $q\to \infty$ the distribution of $C(k)$ tends to a continuous probability distribution, symmetric 
around $0$.  Precisely, there is a continuous function $\Phi_C$ with $\Phi_C(-x)+ \Phi_C(x) =1$ such that uniformly for all $x \in [-X,X]$ 
one has 
$$ 
\frac{1}{q} \# \{ k\pmod q: \ C(k) \le \tfrac {e^{\gamma}}{2} x \} = \Phi_C(x) + o(1). 
$$ 

(2)  Uniformly for all $e \le x\le (\frac 12-\epsilon) \log \log q$ one has 
$$ 
\exp( -A_1 e^x/x) \,\,\,\ge\,\,\, \frac{1}{q} \# \{ k \pmod q: \ C(k) \ge \tfrac{e^{\gamma}}{2} x \} \,\,\,\ge\,\,\, \exp(- A_2  e^{x}\log x) 
$$ 
for some positive constants $A_1$ and $A_2$.  

(3)   For all large $q$, there exists $k \pmod q$ with 
$$ 
-C(-k) = C(k) \ge \Big( \frac{e^{\gamma}}{4}  -\epsilon\Big) \log \log q. 
$$ 

(4)  For all $k \pmod q$ we have 
$$ 
C(k) \ll (\log q)^{\frac 23} (\log \log q)^2. 
$$ 

(5)  The values $C(k)$ have an ``almost periodic" structure.   Precisely, suppose $1\le m \le q/4$ is 
a multiple of every natural number below $B \ge 2$.  Then 
$$ 
\frac 1q \sum_{k \pmod q} |C(k) - C(k+m)|^2 \ll \frac{1}{B^{1-\epsilon}} + \frac{m}{q} \log B. 
$$
\end{theorem}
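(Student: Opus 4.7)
The plan is to derive an arithmetic identity displaying $C(k)$ as a weighted sum of sawtooths at rationals $k\bar n/q$, and then pass each of (1)--(5) through that identity. For odd $\chi\ne\chi_0$ one has $L(0,\chi)=-\sum_{a=1}^{q-1}((a/q))\chi(a)$, and the local Euler factor of $L(1,\chi)A_{q,\chi}$ at $p\nmid q$ simplifies to $\frac{p(p-2)}{(p-1)^2}(1+\chi(p)/(p-2))$. Writing $E_q$ for the $\chi$-free Euler product and expanding $\prod_p(1+\chi(p)/(p-2))$ as a Dirichlet series $\sum_n h(n)\chi(n)$ with $h(n)=\prod_{p\mid n}(p-2)^{-1}$ on squarefree $n$ (truncated at a parameter $y$ for convergence), character orthogonality together with the odd-character projector yields
$$C(k) \,=\, -E_q\sum_{n} h(n)\,\Big(\!\Big(\frac{k\bar n}{q}\Big)\!\Big) + \text{error},$$
where $n$ ranges over squarefree $y$-smooth integers coprime to $q$ and $\bar n\equiv n^{-1}\pmod{q}$. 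This manifests the Fourier-transform-of-Dedekind-sums structure promised by the abstract.

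For parts (1), (2), (3) I would analyze this expression as a random variable under uniform $k\bmod q$. For $y$ with $\prod_{p\le y}p\ll q^{1-\epsilon}$, the Chinese Remainder Theorem makes $(k\bmod p)_{p\le y}$ equidistributed on $\prod_{p\le y}(\mathbb{Z}/p\mathbb{Z})^\times$, so the values $((k\bar p/q))$ for small primes $p$ behave as nearly independent sawtooth variables of variance $\approx 1/12$. Large deviations of $C(k)$ are dominated by the event that these sawtooth values all align, producing a product $E_q\prod_{p\le y}(1+1/(p-2))\sim \tfrac{e^\gamma}{2}\log\log q$ via Mertens' theorem, which fixes the $e^\gamma/2$ normalization of (1). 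Part (1) would then follow by matching characteristic functions of $C(k)$ against those of the limiting independent-sawtooth sum. The upper tail in (2) comes from Chernoff-type exponential moment estimates in that model; the matching lower tail and part (3) come from a CRT construction that forces $((k\bar p/q))\gtrsim 1/2$ simultaneously for every prime $p\le y$, with $y$ pushed just below $(\log q)/\log\log q$ so that $\prod_{p\le y}p<q$ still holds.

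Part (4), the pointwise upper bound, I expect to be the main obstacle. The naive estimate $|L(0,\chi)|\ll\sqrt{q}\log q$ is useless when combined with any per-$\chi$ bound on $L(1,\chi)A_{q,\chi}$. The plan is to apply the Vinogradov--Korobov zero-free region to get $|L(1,\chi)A_{q,\chi}|\ll(\log q)^{2/3}(\log\log q)^{1/3}$ uniformly in $\chi\ne\chi_0$, then reorganize $C(k)$ by opening $L(0,\chi)$ as a sawtooth-weighted character sum and completing the sum over $a$, at which point P\'olya--Vinogradov applied to the resulting character sum recovers the full $\sqrt{q}$ saving. Tracking logarithmic losses through the Euler-product truncation then delivers the claimed $(\log q)^{2/3}(\log\log q)^2$ bound.

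For part (5), the identity of the first paragraph makes the almost-periodicity nearly transparent: the $n$th summand depends on $k$ only through $k\bmod n$ (once $\bar n$ is fixed), so if every $n\le B$ divides $m$ then all those terms are invariant under $k\mapsto k+m$, and only the $n>B$ tail survives. Parseval bounds this tail in $L^2$-norm by $\sum_{n>B}h(n)^2\ll B^{-1+\epsilon}$ (since $h(n)^2\lesssim 1/\phi(n)^2$ up to logs), while the boundary effects from the coprimality condition near the shifted argument contribute $O((m/q)\log B)$, combining to the stated bound.
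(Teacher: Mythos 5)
Your opening identity is essentially the paper's Lemma \ref{lem2.2}: $C(k)=-C\sum_{n\le N}b(n)\psi(k\overline{2n}/q)+\text{error}$ with $b$ supported on odd squarefree $n$ and $b(p)=1/(p-2)$, so the starting point is sound. But the probabilistic model you build on it for parts (1)--(3) rests on a false premise: $\psi(k\overline{p}/q)$ is \emph{not} a function of $k\bmod p$ (here $\overline{p}$ is an inverse mod $q$), so the Chinese Remainder Theorem does not render these quantities independent, and your lower-bound construction ``forcing $\psi(k\overline{p}/q)\gtrsim 1/2$ simultaneously by CRT'' cannot be carried out --- all of those conditions are congruences modulo the same $q$ on the same $k$. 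What is true, and what the paper proves (Proposition \ref{propdiscrete}), is that the discrete correlations $\frac1q\sum_k\prod_j\psi(k\overline{n_j}/q)$ match the continuous correlations $\mathcal{B}(n_1,\dots,n_\ell)$ of \eqref{Bdef}, which are not those of independent variables (e.g.\ $\mathcal{B}(n_1,n_2)=(n_1,n_2)^2/(12n_1n_2)$ is never zero). The paper then computes all moments for $\ell\le\sqrt{\log q}/\log\log q$, deduces the limiting distribution and the upper tail by Markov's inequality, and obtains the lower tail and part (3) by playing the $\ell$-th moment lower bound against the $2\ell$-th moment upper bound; the extremal configuration (the argument sitting just below a common multiple of all small $n$) is realized only inside a continuous model used to lower-bound $M_C(\ell)$, never as an explicit residue $k$. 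Likewise in part (5) the summands are not invariant under $k\mapsto k+m$: when $2n\mid m$ the argument of $\psi$ shifts by $m/(2nq)\ne 0$, and it is exactly this shift (plus the occasional unit jump of $\psi$) that produces the $(m/q)\log B$ term, not ``boundary effects of the coprimality condition.''

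Part (4) is where your plan genuinely fails. A Vinogradov--Korobov bound on $L(1,\chi)A_{q,\chi}$ cannot be paired with any absolute-value treatment of $\frac1{\phi(q)}\sum_\chi\bar\chi(k)L(0,\chi)$, since $\frac1{\phi(q)}\sum_\chi|L(0,\chi)|\asymp\sqrt q$ by the functional equation; and once you open both factors and apply orthogonality you are back at the sawtooth sum $\sum_n b(n)\psi(k\overline{2n}/q)$, where P\'olya--Vinogradov is inapplicable because the summation runs over multiplicative inverses $\overline{n}$ with weights $1/n$ rather than over a character. The trivial bound at that point is $\ll\log q$, and improving it requires cancellation in incomplete Kloosterman-type sums. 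The paper's proof replaces $\psi$ by a Beurling--Selberg-type majorant/minorant $B_K$ and then invokes the Bourgain--Garaev estimate $\sum_{n\le x}e(a\overline{n}/q)\ll x(\log x)^{-3/2}\log q(\log\log q)^3$, which by partial summation yields $\sum_{n\le N}\frac1n e(a\overline{n}/q)\ll(\log q)^{2/3}(\log\log q)^2$; the exponent $2/3$ arises from optimizing this against the trivial bound, not from any zero-free region. Without an input of this Kloosterman-sum type, your argument for part (4) does not close.
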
  

We make a few comments concerning Theorem \ref{thm1} before proceeding to related results.  
In part 1, we believe that the distribution for $C(k)$ has a density, which is to say that $\Phi_C$ 
is in fact differentiable.  Our proof falls just a little short of establishing this.   In part 2, there is a 
gap between the upper and lower bounds for the tail frequencies.  With a little more care, we 
can improve the lower bound there to $\exp(-A_3 e^x)$ for a suitable positive constant $A_3$, but 
there still remains a gap between the two bounds.   The distribution of $C(k)$, and especially the 
double exponential decay seen in part 2, are reminiscent of the distribution of values of $L(1,\chi_d)$ (see \cite{GS}).  
Motivated by this analogy, or by extrapolating the lower bounds in part 2, we believe that in part 3 there should exist values of $C(k)$ as 
large as $(\frac{e^{\gamma}}{2} -\epsilon) \log \log q$.  We also conjecture that $(\frac{e^{\gamma}}{2} +\epsilon) \log \log q$ 
should be the largest possible value of $C(k)$, which would be a substantial strengthening of part 4.  Finally, in addition 
to the almost periodic structure given in part 5 (where $k$ varies), there should be an almost periodic structure as $q$ varies.  That is, 
if $q_1$ and $q_2$ are two large random primes with $q_1-q_2$ being a multiple of the numbers below $B$, then $C(k;q_1)$ and $C(k;q_2)$ 
will be close to each other (on average over $k$).   We hope that an interested reader will embrace some of 
these remaining problems.

While the quantity $C(k)$ is the main focus of this paper, closely related objects arise in two other seemingly unrelated 
contexts.   The first of these concerns Dedekind sums. 
%To gain a sense of $C(k)$, we present a simpler related quantity which arises in the context of Dedekind sums.     
Let $\psi(x)$ denote the 
sawtooth function defined by 
\[ 
\psi(x) = \begin{cases} 
\{ x \} -1/2 &\text{ if  } x \not \in \mathbf{Z}, \\ 
0 &\text{ if } x \in \mathbf{Z},
\end{cases}
\] 
which is an odd function, periodic with period $1$.   If $q$ is prime and $a$ is a reduced residue $\pmod{q}$, then the Dedekind sum $s_q(a)$ is defined by
\begin{equation}\label{eqn:dedekind-def}
s_q(a) := \sum_{x\pmod{q}} \psi\Big(\frac{x}{q}\Big) \psi\Big(\frac{ax}{q}\Big).
\end{equation}
The Dedekind sum arises naturally in number theory when studying the modular transformation properties of the Dedekind $\eta$-function, but it also appears in other contexts and satisfies many interesting properties \cite{Apostol,Vardi}.   We study here 
the discrete Fourier transform of the Dedekind sum $s_q(a)$.  Thus for a prime $q$ and residue class $t\pmod q$ we define 
\begin{equation} \label{eqn:fourier-transform-def}
\widehat{s}_q(t) := \frac{1}{q} \sum_{a\pmod{q}} s_q(a) e(at/q), 
\end{equation}
where $e(z) =e^{2\pi iz}$ throughout.   In Lemma \ref{lem:dedekind-sum}  we shall see that 
\[
\widehat{s}_q(t) = \frac{-1}{\pi i \phi(q)} \sum_{\chi\neq \chi_0\pmod{q}} \bar\chi(t) L(0,\chi) L(1,\chi),
\]
so that $\widehat{s}_q(t)$ is indeed a simpler version of $C(k)$.   An alternative useful expression is 
\begin{equation}\label{eqn:dedekind-sawtooth}
\widehat{s}_q(t) = \frac{1}{\pi i} \sum_{\substack{n=1 \\ (n,q)=1}}^{\infty} \frac{\psi(t\overline{n}/q)}{n}, 
\end{equation} 
where $\overline n$ denotes the multiplicative inverse of the reduced residue class $n\pmod q$ and the 
sum converges since the partial sums $\sum_{n\le x, (n,q)=1} \psi(t\overline{n}/q)$ are bounded.   

\begin{theorem}
\label{thm2} %{\tt Note $\widehat s$ is purely imaginary, and mind the pi} 
(1) As $q\to \infty$ the distribution of $\pi i \widehat{s}_q(t)$ tends to a continuous probability distribution, symmetric 
around $0$.  Precisely, there is a continuous function $\Phi_{s}$ with $\Phi_{s}(-x)+ \Phi_{s}(x) =1$ such that uniformly for all $x \in [-X,X]$ 
one has 
$$ 
\frac{1}{q} \# \{ t\pmod q: \ \pi i \widehat{s}_q(t) \le \tfrac {e^{\gamma}}{2} x \} = \Phi_{ s}(x) + o(1). 
$$ 

(2)  Uniformly for all $e \le x\le (\frac 12-\epsilon) \log \log q$ one has 
$$ 
\exp( -A_1 e^x/x) \,\,\, \ge\,\,\, \frac{1}{q} \# \{ t \pmod q: \  \pi i \widehat{s}_q(t) \ge \frac{e^{\gamma}}{2} x \} \,\,\,\ge\,\,\, \exp(- A_2  e^{x}\log x) 
$$ 
for some positive constants $A_1$ and $A_2$.  

(3)   For all large $q$, there exists $t \pmod q$ with 
$$ 
-\pi i \widehat{s}_q(-t) =\pi i  \widehat{s}_q(t) \ge \Big( \frac{e^{\gamma}}{4}  -\epsilon\Big) \log \log q. 
$$ 

(4)  For all $t \pmod q$ we have 
$$ 
\widehat{s}_q(t) \ll (\log q)^{\frac 23} (\log \log q)^2. 
$$ 

(5)  The values $\widehat{s}_q(t)$ have an ``almost periodic" structure.   Precisely, suppose $1\le m \le q/4$ is 
a multiple of every natural number below $B \ge 2$.  Then 
$$ 
\frac 1q \sum_{t \pmod q} |\widehat{s}_q(t) - \widehat{s}_q(k+m)|^2 \ll \frac{1}{B^{1-\epsilon}} + \frac{m}{q} \log B. 
$$
\end{theorem}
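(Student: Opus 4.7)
The plan is to reduce Theorem~\ref{thm2} to the machinery that handles Theorem~\ref{thm1}, exploiting the formula
$$
\pi i \widehat{s}_q(t) = -\frac{1}{\phi(q)} \sum_{\chi\neq\chi_0\pmod q} \bar\chi(t)\, L(0,\chi) L(1,\chi)
$$
stated just before Theorem~\ref{thm2} (and to be proved in Lemma~\ref{lem:dedekind-sum}).  Comparing this to \eqref{Cdef}, we see that $\pi i \widehat{s}_q(t)$ is exactly the sum defining $-C(t)$ with the Euler product factor $A_{q,\chi}$ replaced by $1$.  Since $A_{q,\chi}$ is a bounded, slowly-varying local correction, every probabilistic, moment-theoretic, or large-value argument used for $C(k)$ should go through verbatim with $A_{q,\chi}$ suppressed.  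In practice, the proof of Theorem~\ref{thm1} will naturally proceed by first analyzing this cleaner sum and then inserting the Euler product at the end, so Theorem~\ref{thm2} should emerge as an intermediate statement along the way.

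For parts (1)--(3) I would set up a random Euler product model: introduce independent random variables $\{X(p)\}_{p\nmid q}$ uniform on the unit circle and form the random analogue of $L(0,\chi)L(1,\chi)$.  Computing moments of $\pi i \widehat{s}_q(t)$ over $t$ via orthogonality and Mertens' theorem then matches moments of the random model, and the normalization $e^{\gamma}/2$ arises from the usual Mertens constant.  The double-exponential shape in part (2) is derived as for $L(1,\chi_d)$ in \cite{GS}, by evaluating the probability that the random $L(1,X)$ is atypically large.  For part (3), a resonator-type construction would produce a $t\pmod q$ at which $\bar n \equiv 1\pmod q$ holds for many small primes $n$; plugged into \eqref{eqn:dedekind-sawtooth} this yields a value of size $(e^{\gamma}/4-\epsilon)\log\log q$.

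For parts (4) and (5) the sawtooth representation \eqref{eqn:dedekind-sawtooth} is the natural starting point.  Part (4) would follow by truncating the series at a suitable height $y$ which is a power of $\log q$, combining the trivial estimate on the tail with Burgess/P\'olya--Vinogradov bounds for the incomplete sums $\sum_{n\le x,(n,q)=1}\psi(t\bar n/q)/n$; the $(\log q)^{2/3}$ exponent corresponds to balancing truncation against Burgess.  For part (5), writing
$$
\widehat{s}_q(t) - \widehat{s}_q(t+m) = \frac{1}{\pi i} \sum_{(n,q)=1} \frac{\psi(t\bar n/q) - \psi((t+m)\bar n/q)}{n},
$$
squaring, averaging over $t$, and expanding $\psi$ into its Fourier series shows that when $n\le B$ one has $n\mid m$ so $m\bar n/q$ is (on most $t$) an integer and the contribution cancels; the leftover tail $n>B$ contributes $O(B^{\epsilon-1})$, while the sawtooth discontinuities on the set where $m\bar n/q\in\mathbf{Z}$ fails to mean exact cancellation produce the boundary term $O((m/q)\log B)$.

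The main obstacle will be the lower bound in part (2): matching the arithmetic moments of $\pi i \widehat{s}_q(t)$ to those of the random model in a range wide enough to invert cleanly, and extracting the precise $\exp(-A_2 e^x\log x)$ shape (together with the improvement to $\exp(-A_3 e^x)$ alluded to in the discussion after Theorem~\ref{thm1}), is noticeably more delicate than the upper bound; there is a genuine gap that persists even here.  A secondary technical difficulty is that part (4) cannot afford the logarithmic slack that is harmless in the distributional statements, so the incomplete character sum bounds must be applied with care to hit the $(\log q)^{2/3}(\log\log q)^2$ target.
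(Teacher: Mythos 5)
Your opening reduction is exactly what the paper does: Theorem \ref{thm2} is proved by running the Theorem \ref{thm1} machinery with the weight $Cb(n)$ replaced by $1/n$ (the second estimate of Lemma \ref{lem2.3} and Theorem \ref{thm4.2} are stated for precisely this purpose). But the machinery you then describe is not that machinery, and several of your steps would fail. For parts (1)--(3) there is no random Euler product with independent $X(p)$ in play: $\pi i\widehat{s}_q(t)$ is a character \emph{average} of $L(0,\chi)L(1,\chi)$, not an individual $L$-value, and after orthogonality it is (in mean square over $t$) the short sawtooth sum $\sum_{n\le B}\psi(t\overline{n}/q)/n$. Its moments are $M_s(\ell)=\sum \mathcal{B}(n_1,\dots,n_\ell)/(n_1\cdots n_\ell)$, i.e.\ the moments of the \emph{dependent} family $\psi(x/n)$ driven by a single uniform $x$; the correlations $\mathcal{B}$ (Propositions \ref{propB} and \ref{propdiscrete}) are the whole content, and an independent-coordinates model does not reproduce them. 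In particular the lower bound on the moments (hence the lower bound in (2)) comes from the Section 5 construction, taking $x$ just below a multiple of $L(\ell_0)$, and part (3) is then deduced from (2) at $x=(\tfrac12-\epsilon)\log\log q$ --- not from a resonator. Your resonator as stated is impossible: $\overline{n}\equiv 1\pmod q$ forces $n\equiv 1\pmod q$, so it holds for no small prime $n>1$; and even the intended choice (say $t\equiv-\operatorname{lcm}(1,\dots,z)\pmod q$, which makes $\psi(t\overline{n}/q)$ close to $1/2$ for all $n\le z$) only controls the initial segment of \eqref{eqn:dedekind-sawtooth}, leaving the tail uncontrolled pointwise, which is exactly why the paper routes (3) through the moment/frequency estimates. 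You also do not address continuity of $\Phi_s$, which in the paper requires the characteristic-function decay $\ll 1/(1+|t|)$ of the continuous model.

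Part (4) is the most serious gap. Burgess and P\'olya--Vinogradov bound multiplicative character sums and say nothing about $\sum_{n\le x}e(a\overline{n}/q)$; the required input is the Bourgain--Garaev bound (after Karatsuba and Korolev) on incomplete Kloosterman-type sums, Lemma \ref{lem6.1}, inserted via a Beurling--Selberg/Vaaler-type majorant and minorant of $\psi$ with coefficients $\ll 1/j$. Moreover you cannot truncate \eqref{eqn:dedekind-sawtooth} at a power of $\log q$ pointwise: the truncation error in Lemma \ref{lem:dedekind-sum} is $O(q/x)$, so the sum must be kept of length a power of $q$, and the exponent $2/3$ arises from the $(\log x)^{3/2}$ saving in the Bourgain--Garaev estimate (trivial bound up to $\exp((\log q)^{2/3}(\log\log q)^2)$, partial summation beyond), not from balancing a truncation against Burgess. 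Finally, in (5) the mechanism is not that $m\overline{n}/q$ is an integer: rather $m\overline{n}\pmod q$ reduces to the small integer $m/n$ for each $n\le B$ dividing $m$, so the sawtooth argument shifts by $m/(nq)$, each such $n$ contributes $\ll m/(nq)$ to the mean square, and summing gives $(m/q)\log B$, with the $n>B$ tail handled by the mean-square approximation of Lemma \ref{lem2.3}; your sketch points in this direction but the cancellation you invoke is not what occurs.
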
 

Theorem \ref{thm2} exactly parallels the results of Theorem \ref{thm1}, with the same deficiencies discussed 
there.   The proofs of Theorems \ref{thm1} and \ref{thm2} are nearly identical, and so we give details only for 
Theorem \ref{thm1}.  

Our third topic concerns the remainder term in the asymptotic for the mean value of Euler's $\phi$-function.  
Define the quantity $R(x)$ by the relation
\[
\sum_{n\leq x} \phi(n) = \frac{3}{\pi^2} x^2 + R(x).
\]
Simple arguments show that $R(x) \ll x \log x$, and Walfisz \cite{Walfisz} established that  $R(x) \ll x (\log x)^{2/3} (\log\log x)^{4/3}$, which is presently the best known estimate.  Montgomery \cite{Montgomery} conjectured that $R(x) \ll x \log\log x$ and $R(x) = \Omega_{\pm}(x \log\log x)$, and he showed that  $R(x) = \Omega_\pm(x \sqrt{\log\log x})$.  Key to Montgomery's work is the expression
\[
R(x) = \frac{\phi(x)}{2} - x \sum_{n\leq x} \frac{\mu(n) \psi(x/n)}{n} + O\Big(x \exp(-c\sqrt{\log x})\Big)
\]
for some positive constant $c$, where $\phi(x) = 0$ if $x\not\in\mathbf{Z}$.  The sum in this expression is akin to the equation \eqref{eqn:dedekind-sawtooth} for $\widehat{s}_q(t)$ 
with $\overline{n}/q$ replaced by $1/n$ and with the weight $1/n$ replaced with $\mu(n)/n$.  
Accordingly, many of the techniques used to prove Theorems \ref{thm1} and \ref{thm2} apply to $R(x)$ as well, though unfortunately with less precision owing to 
the presence of $\mu(n)$.  For convenience, we define $\widetilde{R}(x) = R(x)/x - \phi(x)/2x$.

\begin{theorem}\label{thm3}
As $y\to \infty$ the distribution of $\widetilde{R}(u)$ for real $u\leq y$ tends to a probability distribution, symmetric 
around $0$.  Precisely, there is a function $\Phi_R$ with $\Phi_R(-x)+ \Phi_R(x) =1$ such that uniformly for all $x \in [-X,X]$ 
one has 
$$ 
\frac{1}{y} \mathrm{meas}(\{ u \leq y: \ \widetilde{R}(u) \le \tfrac {3e^{\gamma}}{\pi^2} x \}) = \Phi_R(x) + o(1), 
$$ 
where $\mathrm{meas}(I)$ denotes the Lebesgue measure of $I \subseteq \mathbf{R}$.
Moreover, uniformly for all $e \le x\le (\frac 12-\epsilon) \log \log y$ one has 
$$
\frac{1}{y} \mathrm{meas}(\{ u \leq y: \ \widetilde{R}(u) \ge \tfrac {3e^{\gamma}}{\pi^2} x \}) \leq \exp(-A_1 e^x/x)
$$
for some positive constant $A_1$.
\end{theorem}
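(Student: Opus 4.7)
The plan is to mirror the approach of Theorems~\ref{thm1} and~\ref{thm2}, with Montgomery's identity playing the role of the character-sum formula for $C(k)$. Starting from
\[
\widetilde{R}(u) = -\sum_{n\le u}\frac{\mu(n)\psi(u/n)}{n} + O\bigl(\exp(-c\sqrt{\log u})\bigr),
\]
first truncate at a slowly growing cutoff $N=N(y)$ and write $\widetilde{R}(u) = S_N(u) + E_N(u) + o(1)$, where
\[
S_N(u) := -\sum_{n\le N}\frac{\mu(n)\psi(u/n)}{n}.
\]
A Parseval-type calculation using the Fourier series $\psi(x) = -\pi^{-1}\sum_{k\ge 1} k^{-1}\sin(2\pi k x)$ shows that $y^{-1}\int_0^y E_N(u)^2\,du \ll 1/N$, so the tail beyond $N$ is negligible in mean square.

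For the distribution-convergence assertion, I would note that $S_N(u)$ is periodic in $u$ with period $L_N := \mathrm{lcm}(1,\dots,N)$, so its value distribution on $[0,y]$ tends, as $y\to\infty$, to its distribution on a single period. One then computes the characteristic function $\Phi_N(\xi) := L_N^{-1}\int_0^{L_N} e^{i\xi S_N(u)}\,du$ by Fourier-expanding each factor $e^{-i\xi\mu(n)\psi(u/n)/n}$ and isolating the ``diagonal'' terms---the tuples $(k_n)$ with $\sum_n k_n/n = 0$---whose main ($k_n\equiv 0$) contribution is the formal product
\[
\prod_{\substack{n\le N \\ n\text{ squarefree}}} \frac{\sin(\xi/(2n))}{\xi/(2n)}.
\]
Letting $N\to\infty$ yields a limiting characteristic function $\Phi(\xi)$, and Fourier inversion defines $\Phi_R$. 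Symmetry $\Phi_R(-x)+\Phi_R(x)=1$ follows from the oddness $\psi(-x)=-\psi(x)$.

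For the tail bound, I would apply the exponential-moment inequality
\[
\frac{1}{y}\,\mathrm{meas}\bigl\{u\le y:\widetilde{R}(u)\ge z\bigr\} \le e^{-\lambda z}\cdot\frac{1}{y}\int_0^y e^{\lambda\widetilde{R}(u)}\,du,
\]
bound the moment generating function by truncating at a scale $N\asymp\lambda$ and Fourier-expanding each factor, and optimize over $\lambda>0$. The squarefree-integer density $\sum_{n\le X, \, n\text{ squarefree}} 1/n \sim \tfrac{6}{\pi^2}\log X$ is ultimately responsible for the $3/\pi^2$ in the scaling $3e^\gamma/\pi^2$, while the $e^\gamma$ emerges from Mertens-type product estimates, exactly as in Theorems~\ref{thm1} and~\ref{thm2}.

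The principal obstacle---and the reason only the upper tail bound appears in Theorem~\ref{thm3}, with no matching lower bound, $\Omega$-type existence statement, or almost-periodicity result as in parts~(2)--(5) of Theorems~\ref{thm1} and~\ref{thm2}---is the presence of the M\"obius function. Dirichlet characters admit a clean multiplicative orthogonality that makes off-diagonal contributions in the Fourier expansion vanish exactly; $\mu(n)$ provides only sign cancellations, leaving non-trivial off-diagonal terms that must be estimated rather than computed. A secondary technical complication is that $\psi(u/n)$ for different $n$ are not jointly independent in the limit $y\to\infty$ (their joint distribution is supported on a proper subtorus of $[0,1)^N$), so covariance terms must be tracked carefully throughout.
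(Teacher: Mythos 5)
Your route (characteristic functions for the truncated periodic sum, plus a Chernoff bound for the tail) differs from the paper, which deduces Theorem \ref{thm3} from a single moment computation: Theorem \ref{thm:R-moments} gives $\frac1y\int_0^y \widetilde R(u)^\ell\,du = M_R(\ell)+O(\exp(-\frac c8\sqrt{\log y}))$ for $\ell\ll\sqrt{\log y}/\log\log y$, with $M_R(\ell)^{1/\ell}\le \frac{3e^\gamma}{\pi^2}\log\ell+O(1)$; convergence in distribution then follows from moment determinacy exactly as in Part 1 of Theorem \ref{thm1}, and the tail bound from Markov's inequality with one even moment $\ell\asymp e^x$ (which lies in the admissible range precisely because $x\le(\frac12-\epsilon)\log\log y$). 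As written, your version has two genuine gaps. First, the identification of the limiting characteristic function is wrong: the sawtooths $\psi(u/n)$ are \emph{not} asymptotically independent, and the tuples $(k_n)$ with $\sum_n k_n/n=0$ but not all $k_n=0$ contribute at the same order as the $k_n\equiv 0$ term --- they are exactly what produces the correlations $\mathcal B(n_1,\dots,n_\ell)$ in the paper. Your sinc-product formula is the independent model; it already gives the wrong variance ($\sum_{n\ \mathrm{squarefree}}\frac1{12n^2}=\frac{5}{4\pi^2}$ rather than the true $M_R(2)=\frac1{2\pi^2}=\sum_{n_1,n_2}\frac{\mu(n_1)\mu(n_2)}{n_1n_2}\frac{(n_1,n_2)^2}{12n_1n_2}$). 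The general strategy (distribution of the periodic truncation plus a mean-square tail bound) can be repaired, but then you must carry the full diagonal set $\sum k_n/n=0$, which is essentially the paper's $\mathcal B$-machinery; also your claim $\frac1y\int_0^y E_N(u)^2\,du\ll 1/N$ is not a straight Parseval computation, since the tail of Montgomery's identity runs up to $n$ near $y\exp(-c\sqrt{\log y})$, where $n_1n_2\gg y$ and $\frac1y\int_0^y\psi(u/n_1)\psi(u/n_2)\,du$ is no longer close to $\mathcal B(n_1,n_2)$; this is what the paper's Lemma \ref{lem:mont} (the dyadic Fej\'er-kernel argument with the $\sqrt{M/y}$ term) is for.

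Second, the Chernoff step is not justified. To reach $x$ as large as $(\frac12-\epsilon)\log\log y$ you need $\lambda\asymp e^x$ up to $(\log y)^{1/2-\epsilon}$, but control of $\frac1y\int_0^y e^{\lambda\widetilde R(u)}\,du$ requires moment information of all orders, whereas the moment asymptotics (because of the error in Montgomery's formula and the truncation errors) are only available for $\ell\ll\sqrt{\log y}/\log\log y$; beyond that range one has only $|\widetilde R(u)|\ll\log y$, and the resulting terms $\lambda^\ell(\log y)^\ell/\ell!$ are not summable unless $\lambda\ll\sqrt{\log y}/(\log y\log\log y)$, far too small. Moreover ``Fourier-expanding each factor'' in the MGF again implicitly multiplies expectations, i.e.\ assumes the independence that fails. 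The paper's use of a single $\ell$-th moment with Markov avoids both problems, and your heuristic for the constant (density $\frac6{\pi^2}$ of squarefrees times the factor $\frac12$ from $|\mathcal B|\le 2^{-\ell}/\mathrm{sf}$, with $e^\gamma$ from Mertens) is correct, but the tail argument needs to be restructured along those lines to go through.
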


We prove Theorem \ref{thm3} by showing that all positive integral moments of 
$\widetilde{R}(n)$ exist and are not too large.   The moment calculation  
refines earlier work of Pillai and Chowla \cite{PillaiChowla} and Chowla \cite{Chowla}, who computed the mean and variance respectively: 
\[
\sum_{n\leq x} \widetilde{R}(n) = o(x) \quad \text{and} \quad \frac{1}{y}\int_0^y \widetilde{R}(u)^2 \,du \sim \frac{1}{2\pi^2}.
\]
In Theorem \ref{thm3}, using Montgomery's construction in his $\Omega$-result, we can obtain a lower bound 
for the frequency of large values of ${\widetilde R}(u)$ of the form $\exp(-e^{x^{2+\epsilon}})$, which is very far from the 
upper bound.  We expect that there is a lower bound similar to that in Theorems \ref{thm1} and \ref{thm2} in this situation also, 
and this would be in keeping with Montgomery's conjecture on the true size of ${\widetilde R}(u)$.  
%respectively.  

\subsection*{Organization}  Our main focus is the proofs of Theorems \ref{thm1} and \ref{thm2}.  We establish preliminary results useful for both in Sections \ref{sec:prelim} and \ref{sec:B}.  We then prove Theorem \ref{thm1} in Sections \ref{sec:moments}-\ref{sec:proofthm}; since the proof of Theorem \ref{thm2} follows along identical lines,  we omit it.  In Section \ref{sec:montgomery}, we discuss the modifications that lead to Theorem \ref{thm3}.

\section{First steps} \label{sec:prelim}

Here we establish some formulae for $\widehat{s}_q(t)$ and $C(k)$ which will be 
the basis for our subsequent work. 

\begin{lemma} \label{lem:dedekind-sum}
Let $q$ be prime.  For any $(t,q)=1$, we have
\[
\widehat{s}_q(t) = \frac{-1}{\pi i \phi(q) }\sum_{\chi\neq \chi_0\pmod{q}} \bar\chi(t) L(0,\chi) L(1,\chi) = \frac{1}{\pi i} \sum_{\substack{n=1\\(n,q)=1}}^{\infty} \frac{\psi(t\overline{n}/q)}{n}.
\]
Moreover, for any $x\ge 1$ we have 
$$ 
\widehat{s}_q(t) = \frac{1}{\pi i} \sum_{\substack{n\le x \\ (n,q)=1}} \frac{\psi(t\overline{n}/q)}{n} + O\Big(\frac qx \Big). 
$$ 
\end{lemma}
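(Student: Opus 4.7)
The plan is Fourier analysis on $(\mathbb{Z}/q\mathbb{Z})^*$: I would derive a character expansion of $s_q(a)$, Fourier-transform it to establish the first identity (via the functional equation for odd $L$-functions), obtain the second identity by Fourier inversion, and finish with a short periodicity argument for the truncation. I would begin with the observation that $s_q(a)$ is odd in $a$ (since $\psi$ is odd), so only odd characters $\chi\pmod{q}$ contribute to its Fourier expansion. For non-principal $\chi$, the standard identity $L(0,\chi) = -B_{1,\chi} = -\tfrac{1}{q}\sum_{a=1}^{q-1}a\chi(a)$ rewrites (using $\sum_a \chi(a) = 0$) as $L(0,\chi) = -\sum_{a\pmod{q}}\chi(a)\psi(a/q)$. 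Substituting the definition $s_q(a) = \sum_{x\pmod{q}}\psi(x/q)\psi(ax/q)$ and making the change of variable $b = ax \pmod{q}$ in the inner sum, the expression separates to give
\[
s_q(a) = \frac{1}{\phi(q)}\sum_{\chi\text{ odd}}\chi(a)\,L(0,\chi)L(0,\bar\chi).
\]
Taking the Fourier transform, the sum $\sum_a\chi(a)e(at/q)$ becomes the Gauss sum $\bar\chi(t)\tau(\chi)$ (using $(t,q)=1$). The functional equation for odd primitive characters (applicable since $q$ is prime) gives $L(0,\bar\chi) = \tau(\bar\chi)L(1,\chi)/(\pi i)$, which together with $\tau(\chi)\tau(\bar\chi) = \chi(-1)q = -q$ collapses everything to the first identity. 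Extending from odd $\chi$ to all $\chi\neq\chi_0$ is free since $L(0,\chi) = 0$ for even non-principal $\chi$.

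For the second identity, I would invert the character relation above. Since $\sum_{a,(a,q)=1}\bar\chi(a)\psi(a/q) = -L(0,\bar\chi)$ for $\chi \neq \chi_0$ and vanishes for $\chi = \chi_0$, Fourier inversion on $(\mathbb{Z}/q\mathbb{Z})^*$ yields
\[
\psi(m/q) = -\frac{1}{\phi(q)}\sum_{\chi\neq\chi_0}\chi(m)\,L(0,\bar\chi) \qquad \text{for } (m,q) = 1.
\]
I would then substitute $m = t\overline{n}$, sum $1/n$ over $n \le N$ coprime to $q$, let $N \to \infty$ (so that the inner character sum tends to $L(1,\bar\chi)$), and relabel $\chi \mapsto \bar\chi$ to recover the first identity.

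For the truncation, let $\Psi(y) := \sum_{n \le y,(n,q)=1}\psi(t\overline{n}/q)$. Since $q$ is prime, as $n$ runs over any interval of $q$ consecutive integers coprime to $q$, $\overline{n}$ covers all of $(\mathbb{Z}/q\mathbb{Z})^*$; because $\psi$ is odd, $\sum_{r=1}^{q-1}\psi(tr/q) = 0$. Writing $y = aq + r$ with $0 \le r < q$ then gives $\Psi(y) = \Psi(r)$, hence $|\Psi(y)| \le q/2$ uniformly in $y$. Abel summation then yields
\[
\Bigl|\sum_{\substack{n > x\\(n,q)=1}}\frac{\psi(t\overline{n}/q)}{n}\Bigr| \le \frac{|\Psi(x)|}{x} + \int_x^\infty\frac{|\Psi(u)|}{u^2}\,du \ll \frac{q}{x},
\]
giving the $O(q/x)$ bound. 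The main technical obstacle lies in the bookkeeping of the first identity---tracking signs, the factor $\pi i$, and $\chi(-1) = -1$ through the functional equation; once that is in hand, the second identity is essentially a repackaging via Fourier inversion, and the truncation follows almost immediately from the periodicity of $\psi(t\overline{n}/q)$ modulo $q$.
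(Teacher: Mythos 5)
Your proposal is correct and uses the same ingredients as the paper's proof — the finite-sum formula $L(0,\chi)=-\sum_a\chi(a)\psi(a/q)$, the functional equation via Gauss sums with $\tau(\chi)\tau(\bar\chi)=\chi(-1)q$, orthogonality of characters (you merely organize it as a character expansion of $s_q(a)$ followed by Fourier transform, rather than collapsing $\sum_\chi \bar\chi(t)L(0,\chi)L(1,\chi)$ directly), the same limit $N\to\infty$ of $\sum_{n\le N}\chi(n)/n$ for the second identity, and the same bounded-partial-sums-plus-partial-summation argument for the truncation. This is essentially the paper's proof, with the periodicity bound $|\Psi(y)|\le q/2$ spelled out where the paper calls it trivial.
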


\begin{proof}
For any non-principal character $\chi \pmod{q}$, we have (see, e.g., \cite[Theorem 4.2]{Washington})
\begin{equation} \label{eqn:l-zero}
L(0,\chi) = -\sum_{a\pmod{q}} \chi(a) \psi(a/q).
\end{equation}
Notice that $L(0,\chi)=0$ if $\chi$ is an even character, and that right side of  the formula in \eqref{eqn:l-zero} evaluates to 0 if $\chi$ is principal.  The functional equation for odd characters gives
\[
L(1,\chi) = -\frac{\tau(\chi)\pi i}{q} L(0,\bar\chi),  
\]
where  $\tau(\chi) = \sum_{m\pmod{q}} \chi(m) e(m/q)$ denotes the Gauss sum.   
Thus we obtain
\begin{align*}
\sum_{\chi\neq\chi_0\pmod{q}} \bar\chi(t)L(0,\chi)L(1,\chi) 
	&= -\frac{\pi i}{q} \sum_{\chi\pmod{q}} \tau(\chi) \Big|\sum_{a\pmod{q}} \chi(a)\psi\Big(\frac{a}{q}\Big)\Big|^2 \\
	&= -\frac{\pi i}{q} \sum_{a,b,m \pmod{q}} e(m/q)\psi\Big(\frac{a}{q}\Big)\psi\Big(\frac{b}{q}\Big)\sum_{\chi\pmod{q}} \chi(am)\bar\chi(bt) \\
	&= -\frac{\phi(q) \pi i}{q} \sum_{a,b \not\equiv 0 \pmod{q}} e\Big(\frac{tb\overline{a}}{q}\Big)\psi\Big(\frac{a}{q}\Big)\psi\Big(\frac{b}{q}\Big) \\
	&= -\phi(q) \pi i\, \widehat{s}_q(t).
\end{align*}
The first identity in the lemma follows.

To obtain the second identity, note that using \eqref{eqn:l-zero} and the orthogonality relation 
for characters  
\begin{align} 
\label{2.2}
-\sum_{\chi \neq\chi_0 \pmod q} \overline{\chi}(t) L(0,\chi) \sum_{n\le N}\frac{\chi(n)}{n} 
&= \sum_{\chi \pmod q} \overline{\chi}(t) \sum_{a\pmod q}  \chi(a) \psi(a/q) \sum_{n\le N} \frac{\chi(n)}{n} \nonumber \\
&= \phi(q) \sum_{\substack{n\le N \\ (n,q)=1}}  \frac{1}{n} \psi(t\overline{n}/q). 
\end{align} 
 Letting $N \to \infty$, the second identity follows. 
 
 To obtain the truncated version, note that 
 $$ 
 \Big| \sum_{\substack{n\le x \\ (n,q)=1} } \psi(t\overline{n}/q) \Big| \le q 
 $$ 
 trivially, and therefore 
 $$ 
 \sum_{\substack{ n> x \\ (n,q)=1} } \frac{\psi(t\overline{n}/q)}{n} = \int_x^{\infty} \frac{1}{y^2} \sum_{x<n\le y} \psi(t\overline{n}/q) dy 
 \ll \frac{q}{x}. 
 $$ 
\end{proof}

Recall the definition of $A_{q,\chi}$ from \eqref{Adef}.   Expanding this product out, we find 
\begin{equation} 
\label{Adef2} 
A_{q,\chi} 
	= (2\chi(2)-\chi(2)^2) \prod_{p\nmid 2q} \Big(1-\frac{1}{(p-1)^2}\Big) \Big(1 + \frac{2\chi(p) - \chi(p)^2}{p^2-2p}\Big) 
	= C \sum_{n=1}^\infty a(n) \chi(2n). 
\end{equation} 
Here  
\begin{equation} 
\label{Adef3} 
C = 2 \prod_{\substack{p\ge 3 \\ p \nmid q} } \Big(1- \frac{1}{(p-1)^2} \Big), 
\end{equation} 
and $a(n)$ is a multiplicative function defined by  $a(2)=-1/2$ and $a(2^v) =0$ for all $v\ge 2$, and for 
odd primes $p$ we have 
\begin{equation} 
\label{Adef4} 
a(p) = \frac{2}{p(p-2)}, \qquad a(p^2) = -\frac{1}{p(p-2)}, \qquad \text{and } \qquad a(p^v)= 0 \text{  for all } v \ge 3. 
\end{equation} 
From the definition of $a(n)$ it is easy to check that $\sum_{n=1}^{\infty} |a(n)|n^{\sigma}$ converges for all $\sigma <1/2$ so that 
\begin{equation} 
\label{Adef5} 
\sum_{n\ge N} |a(n)| \ll N^{-\frac 12+\epsilon} \qquad \text{and } \qquad C \sum_{\substack{n\le N\\ (n,q)=1}} a(n) = 1 +O(N^{-\frac 12+\epsilon}). 
\end{equation} 

\begin{lemma}\label{lem2.2}  Define the multiplicative function $b(n)$ by setting $b(n) = \sum_{uv=n} a(u)/v$, so that $b(n)=0$ 
unless $n$ is odd and square-free, and $b(p) = 1/(p-2)$ for all odd primes $p$.  Then for any natural number $N$ we 
have 
$$ 
C(k) = - C \sum_{\substack{ n\le N \\ (n,q)=1}} b(n) \psi(k\overline{2n}/q) + O( q^{\frac 32+\epsilon} N^{-\frac 14+\epsilon}). 
$$ 
\end{lemma}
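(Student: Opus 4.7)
The strategy is to expand $A_{q,\chi}$ using the absolutely convergent series \eqref{Adef2}, convert the resulting character sums into sawtooth sums via Lemma~\ref{lem:dedekind-sum}, and reorganize the expression by the product variable $r=un$. Beginning from the definition of $C(k)$, I would substitute $A_{q,\chi}=C\sum_{u\ge 1}a(u)\chi(2u)$ and truncate the $u$-sum at some $U\ge N$. The truncation error is $O(U^{-1/2+\epsilon})$ by \eqref{Adef5}, and combined with the trivial bounds $|L(0,\chi)L(1,\chi)|\ll \sqrt{q}(\log q)^2$ summed over $\chi$, this contributes an error of $O(q^{1/2+\epsilon}U^{-1/2+\epsilon})$ to $C(k)$.

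Next, using $\overline\chi(k)\chi(2u)=\overline\chi(k\overline{2u})$ and swapping the order of summation, I would apply Lemma~\ref{lem:dedekind-sum} (in its truncated form) to evaluate the inner character sum: for any $M\ge 1$,
\[
\sum_{\chi}\overline\chi(k\overline{2u})L(0,\chi)L(1,\chi) \;=\; -\phi(q)\sum_{\substack{n\le M\\ (n,q)=1}}\frac{\psi(k\overline{2un}/q)}{n} + O(\phi(q)q/M).
\]
Substituting and grouping by $r=un$, the double sum becomes $\sum_{(r,q)=1}\psi(k\overline{2r}/q)\widetilde b(r)$ where $\widetilde b(r)=\sum_{uv=r,\;u\le U,\;v\le M}a(u)/v$. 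Provided $U,M\ge N$, one has $\widetilde b(r)=b(r)$ for all $r\le N$, producing exactly the claimed main term $-C\sum_{r\le N,(r,q)=1}b(r)\psi(k\overline{2r}/q)$; the $n$-truncation contributes an additional error of $O(q/M)$ after summing against $a(u)$.

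The principal difficulty lies in controlling the remaining tail $\sum_{r>N,(r,q)=1}\psi(k\overline{2r}/q)\widetilde b(r)$, which I would rewrite as $\sum_{u\le U}a(u)\sum_{N/u<v\le M,\,(v,q)=1}\psi(k\overline{2uv}/q)/v$. The elementary identity $\sum_{v=1}^{q-1}\psi(T\overline v/q)=0$ (valid for every $T$ coprime to $q$, since $v\mapsto T\overline v$ permutes the nonzero residues and $\sum_{a=1}^{q-1}\psi(a/q)=0$) yields $\bigl|\sum_{v\le Y,(v,q)=1}\psi(k\overline{2uv}/q)\bigr|\ll q$, and partial summation then bounds the inner sum by $O(qu/N)$. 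Combined with $\sum_{u\le U}u|a(u)|\ll U^{1/2+\epsilon}$, which follows from the convergence of $\sum_u u^{1/2-\epsilon}|a(u)|$ recorded in \eqref{Adef5}, the tail contributes $O(qU^{1/2+\epsilon}/N)$. Taking for instance $U=N$ and $M=q$ collapses all three errors well within the target $O(q^{3/2+\epsilon}N^{-1/4+\epsilon})$ with room to spare. The main obstacle is precisely this partial-summation argument, where one must simultaneously harness cancellation in $\psi(k\overline{2uv}/q)$ and cope with the slow convergence $\sum_{u\le U}u|a(u)|\ll U^{1/2+\epsilon}$.
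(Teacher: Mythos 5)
Your argument is correct in substance but takes a genuinely different route from the paper. The paper works character by character: it converts the truncated sum $\sum_{n\le N}b(n)\psi(k\overline{2n}/q)$ into character sums via orthogonality, approximates $\sum_{n\le N}b(n)\chi(2n)$ by $C^{-1}A_{q,\chi}L(1,\chi)$ through a hyperbola splitting at $\sqrt N$ together with $L(1,\chi)=\sum_{n\le x}\chi(n)/n+O(q/x)$, and then pays the pointwise bound $L(0,\chi)\ll\sqrt q\log q$ on the per-character error, which is exactly where the factor $q^{3/2+\epsilon}N^{-1/4+\epsilon}$ comes from. You instead stay on the sawtooth side: expand $A_{q,\chi}$ via \eqref{Adef2}, apply the truncated identity of Lemma \ref{lem:dedekind-sum} for each $u$, regroup by $r=un$, and kill the tail $r>N$ by partial summation against the $\ll q$ bound for partial sums of $\psi(t\overline v/q)$ over reduced residues, combined with $\sum_{u\le U}u|a(u)|\ll U^{1/2+\epsilon}$. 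This exploits the cancellation in the sawtooth sums directly rather than routing it through $L(0,\chi)$, and in fact yields the stronger error $O(qN^{-1/2+\epsilon})+O(q/M)$, so it proves more than the lemma asserts. The one flaw is your closing parameter choice: $M=q$ violates your own hypothesis $M\ge N$ when $N>q$ (so the identification $\widetilde b(r)=b(r)$ for $r\le N$ breaks down), and it leaves an error $O(q/M)=O(1)$ that exceeds the target $q^{3/2+\epsilon}N^{-1/4+\epsilon}$ once $N\gg q^{6}$, a regime the paper actually needs later ($N=q^{8}$ and $q^{10}$). Taking, say, $U=N$ and $M=qN$ repairs this instantly and gives total error $\ll qN^{-1/2+\epsilon}\le q^{3/2+\epsilon}N^{-1/4+\epsilon}$ for every natural number $N$.
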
 
\begin{proof}   Arguing as in \eqref{2.2} we find 
\begin{equation} 
\label{2.65} 
\frac{1}{\phi(q)} \sum_{\chi \neq \chi_0 \pmod q} \overline{\chi(k)} L(0,\chi) \sum_{\substack{n\le N \\ (n,q)=1}}  b(n) \chi(2n) = 
-\sum_{\substack{n\le N \\ (n,q)=1}} b(n) \psi(k \overline{2n}/q). 
\end{equation} 
Now if $n =uv \le N$ then either $u\le \sqrt{N}$ or $v\le \sqrt{N}$ and $\sqrt{N} < u\le N/v$.   Therefore 
\begin{equation} 
\label{2.7}
\sum_{n\le N} b(n) \chi(2n) = \sum_{u\le \sqrt{N}} a(u) \chi(2u) \sum_{v\le N/u} \frac{\chi(v)}{v} + \sum_{v\le \sqrt{N}} \frac{\chi(v)}{v} \sum_{\sqrt{N} < u \le N/v} a(u) \chi(2u). 
\end{equation} 
Bounding the partial sums of characters trivially, we find 
\begin{equation} 
\label{2.9}
L(1,\chi) = \sum_{ n\le x} \frac{\chi(n)}{n}  + \int_{x}^{\infty} \sum_{x<n\le y} \chi(n) \frac{dy}{y^2} = \sum_{n\le x} \frac{\chi(n)}{n} + O\Big( \frac{q}{x}\Big),
\end{equation} 
and so the first term in \eqref{2.7} is (using \eqref{Adef5}) 
\begin{align*}
&\sum_{u\le \sqrt{N}} a(u) \chi(2u) \Big( L(1,\chi) + O\Big( \frac{qu}{N}\Big) \Big)\\
 =& 
C^{-1} A_{q,\chi}L(1,\chi) + O( (\log q)N^{-\frac 14+\epsilon}) + O(qN^{-\frac 12})\\
=& C^{-1} A_{q,\chi} L(1,\chi)+ O(qN^{-\frac 14+\epsilon}). 
\end{align*}

As for the second term in \eqref{2.7}, using \eqref{Adef5} we may bound this by 
$$ 
\ll \sum_{v\le \sqrt{N}} \frac{1}{v}  N^{-\frac 14+\epsilon}  \ll N^{-\frac 14+\epsilon}. 
$$ 
We conclude that 
$$ 
C\sum_{\substack{n\le N \\ (n,q)=1}} b(n) \psi(k \overline{2n}/q) = - \frac{1}{\phi(q)} \sum_{\chi\neq \chi_0 \pmod q} \overline{\chi(k)} 
L(0,\chi) \Big( A_{q,\chi} L(1,\chi) + O(qN^{-\frac 14+\epsilon})\Big),
$$ 
and since $L(0,\chi) \ll \sqrt{q} \log q$, the lemma follows. 
\end{proof}

Lemmas \ref{lem:dedekind-sum} and \ref{lem2.2} give crude approximations to $\widehat{s}_q(t)$ and $C(k)$ by 
long sums (for example taking $x=q^2$ in Lemma \ref{lem:dedekind-sum}, or taking $N= q^8$ in Lemma \ref{lem2.2}).   
However, on average over $t$ or $k$, it is possible to approximate these quantities by very short sums.    

\begin{lemma} \label{lem2.3}   Let $1\le B< q$ be a real number.   Then 
$$ 
\frac{1}{\phi(q)} \sum_{k \pmod q} \Big| C(k) + C\sum_{n\le B} b(n) \psi(k\overline{2n}/q) \Big|^2 \ll B^{-1+\epsilon}  , 
$$ 
and 
$$ 
\frac{1}{\phi(q)} \sum_{t \pmod q} \Big| \widehat{s}_q(t) - \frac{1}{\pi i} \sum_{n\le B} \frac{\psi(t\overline{n}/q)}{n} \Big|^2 \ll B^{-1+\epsilon} . 
$$ 
\end{lemma}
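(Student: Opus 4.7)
The plan is to start from Lemma \ref{lem2.2} with a generous truncation, say $N = q^{10}$, for which the error term there is $O(q^{-1+\epsilon})$ and absorbed into the target.  This reduces the first inequality of Lemma \ref{lem2.3} to showing that
$$ \frac{1}{\phi(q)}\sum_{k\pmod q}|T(k)|^2 \ll B^{-1+\epsilon}, \qquad T(k) := \sum_{\substack{B<n\le N\\(n,q)=1}} b(n)\psi(k\overline{2n}/q). $$
The $\widehat{s}_q(t)$ statement is handled analogously by combining the truncated form of Lemma \ref{lem:dedekind-sum} (with $x = q^2$) with the same machinery, replacing $b(n)$ by $1/n$; so I focus on $C(k)$.

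Next I would expand $\psi(a/q) = -\frac{1}{\phi(q)}\sum_{\chi\,\text{odd}}\chi(a)L(0,\overline\chi)$ for $(a,q)=1$, which is immediate from Fourier inversion applied to \eqref{eqn:l-zero}.  Substituting into $T(k)$ and applying Parseval for characters mod $q$ yields
$$ \frac{1}{\phi(q)}\sum_k|T(k)|^2 = \frac{1}{\phi(q)^2}\sum_{\chi\,\text{odd}}|L(0,\chi)|^2|H(\chi)|^2, \qquad H(\chi):=\sum_{\substack{B<n\le N\\(n,q)=1}} b(n)\chi(2n). $$
The odd-character functional equation $|L(0,\chi)|^2 = (q/\pi^2)|L(1,\chi)|^2$ then reduces matters to bounding $\sum_{\chi\,\text{odd}}|L(1,\chi)H(\chi)|^2$.

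To convert this into a Dirichlet polynomial, I would truncate $L(1,\chi) = \sum_{v\le V}\chi(v)/v + O(q/V)$ with $V = q^{100}$ (truncation error negligible, using $|H(\chi)| \ll \log q$), producing
$$ L_V(\chi)H(\chi) = \chi(2)\sum_{\substack{m\\(m,q)=1}}\frac{c(m)\chi(m)}{m}, \qquad c(m) := \sum_{\substack{n\mid m\\ \max(B,\,m/V)<n\le N}} b(n)n. $$
The coefficient $c(m)$ vanishes unless $m$ admits a divisor exceeding $B$ (so $m>B$), and satisfies $c(m) \ll m^\epsilon$ because $b(n)n = \prod_{p\mid n}p/(p-2) \ll n^\epsilon$.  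Parseval for Dirichlet characters now yields
$$ \sum_{\chi\,\text{odd}}|L_V(\chi)H(\chi)|^2 \ll \phi(q)\sum_{\substack{(m_1m_2,q)=1 \\ m_1\equiv \pm m_2\,(q)}} \frac{c(m_1)c(m_2)}{m_1m_2}. $$
The diagonal $m_1 = m_2$ contributes $\phi(q)\sum_{m>B}m^{-2+\epsilon} \ll \phi(q)B^{-1+\epsilon}$, which after combining with the $q/(\pi^2\phi(q)^2)$ prefactor is exactly the desired $O(B^{-1+\epsilon})$.

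The main obstacle is controlling the off-diagonal together with all secondary errors.  For the off-diagonal pairs $m_2 = \pm m_1 + jq$ with $j\ne 0$ and $m_2 \le VN$, the inner sum $\sum_j(|j|q)^{-1+\epsilon}$ is $O(q^{-1}(VN)^\epsilon)$ by elementary bounds, so the total off-diagonal is $O(\phi(q)q^{-1+\epsilon}(VN)^{2\epsilon}) = O(q^{O(\epsilon)})$, which translates through the prefactor into $O(q^{-1+\epsilon})$ in the mean square; this is absorbed into $B^{-1+\epsilon}$ since $B < q$.  With $V$ and $N$ chosen polynomial in $q$ every truncation error stays below $q^{-10}$, so the argument goes through cleanly.
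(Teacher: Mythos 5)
Your proposal is correct and takes essentially the same route as the paper: reduce via Lemma \ref{lem2.2} to the tail $B<n\le q^{10}$, pass to characters by orthogonality in $k$, use the odd functional equation to trade $|L(0,\chi)|$ for $\sqrt{q}\,|L(1,\chi)|$, truncate $L(1,\chi)$ to a Dirichlet polynomial so that the product has coefficients $\ll m^{\epsilon}$ supported on $m>B$, and conclude with the diagonal/off-diagonal character-orthogonality estimate (the $\widehat{s}_q(t)$ case being identical with $b(n)$ replaced by $1/n$). The one point to make explicit in your off-diagonal bound is that you should take $m_2$ to be the larger of the pair (as the paper does via its ``without loss of generality''), so that $m_2\gg |j|q$ and the bound $\sum_{j\neq 0}(|j|q)^{-1+\epsilon}\ll q^{-1+\epsilon}$ is justified; without this ordering, $m_2=m_1-|j|q$ can be small and the stated inner-sum bound would not hold pointwise.
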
 
\begin{proof}   We shall content ourselves with proving the estimate for $C(k)$, the situation for ${\widehat{s}}_q(t)$ being 
entirely similar.   Using \eqref{2.65} and Lemma \ref{lem2.2} we see that 
\begin{align*} 
&\frac{1}{\phi(q)} \sum_{k \pmod q} \Big| C(k) + C\sum_{n\le B} b(n) \psi(k\overline{2n}/q) \Big|^2 \\
=& 
\frac{1}{\phi(q)} \sum_{k\pmod q} \Big| \frac{C}{\phi(q)} \sum_{\chi \neq \chi_0 \pmod q} \overline{\chi(k)} L(0,\chi) \sum_{\substack{ 
B< n \le q^{10} }} b(n) \chi(2n) + O(q^{-1+\epsilon}) \Big|^2.
\end{align*}
Using the orthogonality of characters to evaluate the sum over $k$, this is 
$$
\ll \frac{1}{\phi(q)^2} \sum_{\chi \neq \chi_0 \pmod q} |L(0,\chi)|^2 \Big| \sum_{B < n \le q^{10}} b(n) \chi(n) \Big|^2  + q^{-2+\epsilon}, 
$$ 
and using \eqref{2.9} and the functional equation this is 
$$  
\ll \frac{1}{q} \sum_{\chi \neq \chi_0 \pmod q}   \Big| \sum_{m\le q^2} \frac{\chi(m)}{m} \sum_{B< n \le q^{10}} b(n) \chi(n)\Big|^2 + q^{-2+\epsilon}. 
$$ 

Write temporarily 
$$ 
 \sum_{m\le q^2} \frac{\chi(m)}{m} \sum_{B< n \le q^{10}} b(n) \chi(n) = \sum_{B< n \le q^{12} } \frac{\alpha(n)}{n} \chi(n), 
$$ 
for some coefficients $\alpha(n) \ll n^{\epsilon}$.   Then (including also the contribution of $\chi_0$ below) 
$$ 
\frac{1}{q} \sum_{\chi \neq \chi_0 \pmod q} \Big|\sum_{B < n \le q^{12}} \frac{\alpha(n)}{n} \chi(n) \Big|^2 
\ll \sum_{\substack{ B < n_1, n_2 \le q^{12} \\ n_1 \equiv n_2 \pmod q}} \frac{|\alpha(n_1) \alpha(n_2)|}{n_1 n_2}.
$$ 
The terms with $n_1$, $n_2$ both below $q$ (so that $n_1 =n_2$) contribute 
$$ 
\ll \sum_{B <n <q} \frac{n^{\epsilon}}{n^2} \ll B^{-1+\epsilon}.
$$ 
The terms with $\max (n_1, n_2) \ge q$ contribute (assume without loss of generality that $n_2$ is the larger one) 
$$ 
\ll q^{\epsilon} \sum_{B< n_1 \le q^{12}} \frac{1}{n_1} \sum_{\substack{ q < n_2 \le q^{12}  \\ n_2 \equiv n_1\pmod q}} \frac{1}{n_2} 
\ll q^{\epsilon} \log q \frac{\log q}{q} \ll q^{-1+\epsilon}.
$$ 
Assembling these estimates, the lemma follows. 
\end{proof}  

%{\tt Work out almost periodicity} 

\section{A key quantity}  \label{sec:B}

We shall study ${\widehat s}_q(t)$ and $C(k)$ by computing their moments, and the following key quantity will arise 
in this context.  Let $\ell$ be a natural number, and suppose $n_1$, $\ldots$, $n_\ell$ are $\ell$ natural numbers.  Then 
set 
\begin{equation} 
\label{Bdef} 
{\mathcal B}(n_1,\ldots, n_\ell) = \frac{1}{n_1\cdots n_\ell} \int_0^{n_1\cdots n_\ell} \prod_{j=1}^{\ell} \psi(x/n_j) dx. 
\end{equation}

\begin{proposition} \label{propB}  The quantity $\mathcal{B}(n_1,\dots,n_\ell)$ satisfies the following properties. 

(1)  If $\ell$ is odd then ${\mathcal B}(n_1,\ldots, n_\ell)=0$.  For even $\ell$ we have 
\begin{equation} 
\label{3.2} 
{\mathcal B}(n_1,\ldots, n_\ell) = \Big( \frac{i}{2\pi}\Big)^{\ell} \sum_{\substack{k_1, \ldots , k_\ell \neq 0\\ \sum k_j/n_j =0 }} \frac{1}{k_1\cdots k_\ell},  
\end{equation} 
where the sum is over all non-zero integers $k_j$, and this sum is absolutely convergent.  In the case $\ell=2$ one has 
$$ 
{\mathcal B}(n_1,n_2) = \frac{(n_1,n_2)^2}{12 n_1n_2}.
$$ 

(2) If $p$ is a prime dividing $n_j$ and such that $p$ does not divide any other $n_i$, then 
$$ 
{\mathcal B}(n_1,\ldots,n_j,\ldots,n_\ell) = \frac{1}{p} {\mathcal B}(n_1,\ldots, n_j/p, \ldots, n_\ell). 
$$ 

(3) If we write $n_1\cdots n_\ell = rs$ where $r$ and $s$ are coprime and $r$ is square-free while $s$ is square-full then 
$$
|{\mathcal B}(n_1,\ldots, n_\ell)| \le 2^{-\ell}r^{-1}.
$$  
\end{proposition}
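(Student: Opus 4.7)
The plan is to first establish (3.2) by Fourier analysis and then derive all three parts from it. The vanishing when $\ell$ is odd follows from the substitution $x \mapsto N - x$ with $N = n_1 \cdots n_\ell$: since each $n_j$ divides $N$, this sends each $\psi(x/n_j)$ to $\psi(-x/n_j) = -\psi(x/n_j)$ a.e., so $\mathcal{B} = (-1)^\ell \mathcal{B}$. For the identity in the even case I would approximate $\psi$ by its Fej\'er (Ces\`aro) means $\sigma_K\psi$, which are uniformly bounded by $\|\psi\|_\infty \le 1/2$ (the symmetric partial sums themselves are only bounded by $O(1/\|y\|)$) and which converge to $\psi$ pointwise almost everywhere. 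Dominated convergence applied to $\prod_j (\sigma_K\psi)(x/n_j)$, combined with the orthogonality $\frac{1}{N}\int_0^N e(x(k_1/n_1 + \cdots + k_\ell/n_\ell))\,dx$ being $1$ or $0$ according as the linear form vanishes, reduces $\mathcal{B}$ to the limit
\[
\lim_{K \to \infty}\sum_{\substack{0 < |k_j| \le K \\ k_1/n_1 + \cdots + k_\ell/n_\ell = 0}} \prod_{j=1}^\ell \Big(1 - \frac{|k_j|}{K+1}\Big) \frac{i}{2\pi k_j}.
\]
A second dominated convergence on the lattice then sends each Fej\'er weight to $1$ and recovers (3.2), provided the unweighted sum converges absolutely. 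Absolute convergence I would verify by a dyadic count: if all $|k_j|$ are of size $\sim T$, the $(\ell-1)$-dimensional linear constraint admits $\ll T^{\ell-1}$ solutions each of size $\ll T^{-\ell}$, contributing $\ll T^{-1}$ per dyadic block, and unbalanced ranges are handled by iterated harmonic-type estimates.

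The $\ell = 2$ case then follows by parametrizing the constraint. Setting $d = (n_1, n_2)$ and $n_i = d m_i$ with $(m_1, m_2) = 1$, the condition $k_1/n_1 + k_2/n_2 = 0$ forces $(k_1, k_2) = (-m_1 j, m_2 j)$ for nonzero integer $j$; summing $1/(k_1 k_2) = -1/(m_1 m_2 j^2)$ produces $-2\zeta(2)/(m_1 m_2) = -\pi^2 d^2/(3 n_1 n_2)$, and multiplying by $(i/(2\pi))^2 = -1/(4\pi^2)$ yields $(n_1, n_2)^2/(12 n_1 n_2)$.

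For part (2), clearing denominators rewrites the constraint as $\sum_i k_i (N/n_i) = 0$. Under the hypothesis $p \mid n_j$ and $p \nmid n_i$ for $i \ne j$, one has $v_p(N/n_j) = 0$ while $p \mid N/n_i$ for $i \ne j$, so reducing modulo $p$ forces $p \mid k_j$. Substituting $k_j = p k_j'$ in (3.2) recasts the constraint as $\sum_{i \ne j} k_i/n_i + k_j'/(n_j/p) = 0$ and extracts $1/p$ from $1/k_j$, producing the claimed identity. For part (3), I would iterate (2) over each prime $p \mid r$: such a $p$ divides exactly one $n_j$ to the first power, stripping it contributes a factor $1/p$, and the hypothesis persists for the remaining primes of $r$. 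After exhausting $r$ we arrive at $(1/r)\mathcal{B}(m_1, \ldots, m_\ell)$ with $m_1 \cdots m_\ell = s$, and the trivial bound $|\psi| \le 1/2$ gives $|\mathcal{B}(m_1, \ldots, m_\ell)| \le 2^{-\ell}$.

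The main obstacle is the rigorous justification of (3.2): the naive approach via symmetric partial sums fails because those are not uniformly bounded in $y$, so one must use Fej\'er means (or some other summability method) to legitimize the interchange of limit and integral, and then verify absolute convergence of the limiting lattice sum by the dyadic count above. Once (3.2) is in hand, parts (2) and (3) follow almost formally.
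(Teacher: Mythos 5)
Your proposal is correct and takes essentially the same route as the paper: both establish \eqref{3.2} by replacing $\psi$ with its Fej\'er means, using orthogonality over the period $n_1\cdots n_\ell$, and passing to the limit, and both then deduce parts (2) and (3) from \eqref{3.2} by the same ``$p\mid k_j$, cancel $p$'' argument together with the trivial bound $|{\mathcal B}|\le 2^{-\ell}$. The only divergence is technical: the paper quantifies the limit via explicit $L^1$ bounds for $\psi-\psi_N$ and proves absolute convergence of the constrained sum by a Parseval argument with the kernel $\sum_{0<|k|\le N}e(k\theta)/|k|\ll \log\bigl(N/(1+N\Vert\theta\Vert)\bigr)$, whereas you use dominated convergence (twice) and a dyadic lattice count, both of which are sound.
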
 

%dont need this yet
%%%

 We begin by recalling the Fourier expansion of the sawtooth function.  
Note that ${\widehat \psi}(0)=0$ and for $k\neq 0$ we have  
\begin{equation} 
\label{Fex1} 
{\widehat \psi}(k) = \int_0^1 \psi(x) e(-kx) dx = \frac{1}{-2\pi i k} = \frac{i}{2\pi k},
\end{equation}  
and so
\begin{equation} \label{eqn:sawtooth-fourier}
\psi(x) =  i \sum_{k\neq 0} \frac{e(kx)}{2\pi k}.
\end{equation}
This series converges conditionally pointwise for each $x\not\in \mathbf{Z}$, and also in the $L^2$-sense.  
For any non-negative integer $N$, recall also the Fejer kernel 
\begin{equation} 
\label{Fejer1} 
K_N(x) = \sum_{j=-N}^{N} \Big( 1- \frac{|j|}{N+1} \Big) e(jx) = \frac{1}{N+1} \Big( \frac{\sin (\pi (N+1)x)}{\sin \pi x} \Big)^2.  
\end{equation}  
We shall find it convenient to replace $\psi(x)$ by the approximation $\psi_N(x)$ defined by  
\begin{equation} 
\label{Fex3} 
\psi_N(x) = i \sum_{0<|k|\le N} \frac{e(kx)}{2\pi k}\Big(1-\frac{|k|}{N+1}\Big).  
\end{equation} 
Note that $\psi_N$ is the convolution of $\psi$ with the Fejer kernel $K_N$ 
$$
\psi_N(x) = \int_{0}^{1} \psi(y) K_N(x-y) dy, 
$$ 
and so 
\begin{equation} 
\label{Fex4} 
|\psi_N(x) - \psi(x)| \ll \min \Big( 1, \frac{1}{N \Vert x \Vert}\Big), 
\end{equation} 
which implies that 
\begin{equation} 
\label{Fex5} 
\int_0^1 | \psi_N(x) - \psi(x)| dx  \ll \frac{1+ \log N}{N}. 
\end{equation} 
Note also that $|\psi_N(x)| \le 1/2$ always.

\begin{proof}[Proof of Proposition \ref{propB}: Part 1]  Since $\psi$ is an odd function, it is clear that ${\mathcal B}(n_1,\ldots, n_\ell) =0$ 
for odd $\ell$.  Now suppose $\ell$ is even.    By Parseval it follows that 
\begin{equation} 
\label{Pars1}  
\frac{1}{n_1\cdots n_\ell} \int_0^{n_1\cdots n_\ell} \psi_N(x/n_1)\cdots \psi_N(x/n_\ell) dx = \Big(\frac{i}{2\pi}\Big)^{\ell} 
\sum_{\substack{0<|k_j| \le N \\ \sum k_j/n_j =0 }} 
\frac{1}{k_1\cdots k_\ell} \prod_{j=1}^{\ell} \Big(1- \frac{|k_j|}{N+1}\Big). 
\end{equation} 
For any complex numbers $\alpha_1$, $\ldots$, $\alpha_\ell$ and $\beta_1$, $\ldots$, $\beta_\ell$ note  
the simple identity 
\begin{equation} 
\label{identity} 
\alpha_1\cdots \alpha_\ell - \beta_1 \cdots \beta_\ell  = (\alpha_1 -\beta_1)\alpha_2\cdots \alpha_\ell+ \beta_1 (\alpha_2-\beta_2) \alpha_3\cdots \alpha_\ell + \beta_1\cdots \beta_{\ell-1} (\alpha_\ell -\beta_\ell). 
\end{equation} 
Applying this, we obtain 
$$ 
|\psi(x/n_1) \cdots \psi(x/n_\ell) - \psi_{N}(x/n_1)\cdots \psi_{N}(x/n_{\ell})| \le \frac{1}{2^{\ell -1}} \sum_{j=1}^{\ell} |\psi(x/n_j) -
\psi_{N}(x/n_j)|,  
$$ 
and so by \eqref{Pars1} and \eqref{Fex5} we conclude that 
\begin{align} 
\label{Pars2} 
{\mathcal B}(n_1,\ldots,n_\ell) &=\frac{1}{n_1\cdots n_\ell} \int_0^{n_1\cdots n_\ell}  \psi(x/n_1)\cdots \psi(x/n_\ell) dx \nonumber \\
&=  \Big(\frac{i}{2\pi}\Big)^{\ell} \sum_{\substack{0<|k_j| \le N \\ \sum k_j/n_j =0 }} 
\frac{1}{k_1\cdots k_\ell} \prod_{j=1}^{\ell} \Big(1- \frac{|k_j|}{N+1}\Big) + O\Big( \frac{1+\log N}{N}\Big). 
\end{align} 

We now show that 
$$ 
\sum_{\substack{0<|k_j| \le N \\ \sum k_j/n_j =0 }} \frac{1}{|k_1\cdots k_\ell|}
$$ 
is bounded, so that \eqref{Pars2} will imply (letting $N\to \infty$) the stated formula \eqref{3.2} for ${\mathcal B}(n_1,\ldots,n_\ell)$ 
and that the sum there converges absolutely.    By Parseval 
$$ 
\sum_{\substack{0<|k_j| \le N \\ \sum k_j/n_j =0 }} \frac{1}{|k_1\cdots k_\ell|}
= \frac{1}{n_1\cdots n_{\ell}} \int_0^{n_1 \cdots n_\ell} \prod_{j=1}^{\ell} \Big( \sum_{0< |k_j| \le N} \frac{e(k_j x/n_j)}{|k_j|} \Big) dx.
$$ 
One may check that (with $\Vert x\Vert$ denoting the distance of $x$ from the nearest integer) 
$$ 
\sum_{0< |k| \le N} \frac{e(k\theta)}{|k|} \ll \log \min \Big( N, \frac{1}{\Vert \theta \Vert}\Big) \ll \log \frac{N}{1+N\Vert \theta\Vert}. 
$$ 
 Using this and the arithmetic-geometric mean inequality above, we find 
 \begin{align*}
\sum_{\substack{0<|k_j| \le N \\ \sum k_j/n_j =0 }} \frac{1}{|k_1\cdots k_\ell|} &\ll 
\sum_{j=1}^{\ell} \frac{1}{n_1\cdots n_\ell} \int_0^{n_1 \cdots n_\ell} \Big( \log \frac{N}{1+N \Vert x/n_j\Vert} \Big)^{\ell} dx 
\\
&\ll \int_0^1 \Big( \log \frac{N}{1+N\Vert x\Vert} \Big)^{\ell} dx \ll 1.
\end{align*} 
This proves our claim, and establishes \eqref{3.2}.  
% $$ 
% {\mathcal B}(n_1,\cdots, n_\ell) =  \Big(\frac{i}{2\pi}\Big)^{\ell} \sum_{\substack{k_1, \ldots, k_\ell \neq 0 \\ \sum k_j/n_j =0 }} 
%\frac{1}{k_1\cdots k_\ell}.
%$$ 

If $\ell =2$ then the condition $k_1/n_1+k_2/n_2 =0$ means that $k_1= r n_1/(n_1,n_2)$ and $k_2 = -r n_2/(n_1,n_2)$ 
for some non-zero integer $r$.  Therefore 
$$ 
{\mathcal B}(n_1, n_2) = -\frac{1}{4\pi^2} \sum_{r\neq 0} \frac{-1}{r^2} \frac{(n_1,n_2)^2}{n_1n_2} = \frac{(n_1,n_2)^2}{12 n_1n_2}. 
$$ 
\end{proof}

\begin{proof}[Proof of Proposition \ref{propB}: Parts 2 and 3]  If $p$ divides $n_j$ and no other $n_i$, then, in \eqref{3.2}, 
$k_j$ must necessarily be a multiple of $p$.  Cancelling $p$ from $k_j$ and $n_j$, Part 2 follows.  Part 3 follows from Part 2, and noting that $|{\mathcal B}(n_1,\ldots,n_\ell)| \le 2^{-\ell}$ always.  
\end{proof} 

For computing the moments of ${\widehat{s}}_q(t)$ and $C(k)$  the following proposition, which connects correlations of the sawtooth
function with ${\mathcal B}$, will be very useful. 

\begin{proposition} \label{propdiscrete}  Let $n_1$, $\ldots$, $n_\ell$ be positive integers.  Define 
$K=n_1\cdots n_{\ell}/\min(n_1, \ldots, n_{\ell})$.  If $K< q/\ell$ then  
$$
\frac{1}{q} \sum_{k\pmod q} \psi(k\overline{n_1}/q) \cdots \psi (k\overline{n_{\ell}}/q) = {\mathcal B}(n_1,\ldots, n_\ell) + O\Big( \frac{\ell K}{q} \log \Big(\frac{eq}{K}\Big) \Big). 
$$ 
\end{proposition}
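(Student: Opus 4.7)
The plan is to replace the sawtooth function $\psi$ by its Fej\'er-smoothed trigonometric-polynomial approximation $\psi_N$ on both sides of the claimed identity, compute each smoothed version exactly, and then match the two by choosing $N$ small enough that a Kronecker congruence becomes an integer equation.

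First I would bound the cost of replacing $\psi$ by $\psi_N$. Using the telescoping identity \eqref{identity} together with $|\psi|,|\psi_N|\le 1/2$, the error in each product is dominated by $2^{-(\ell-1)}\sum_j |\psi(x_j)-\psi_N(x_j)|$. On the discrete side, since $\overline{n_j}$ is a unit mod $q$, the average of $|\psi-\psi_N|$ over the points $k\overline{n_j}/q$ is a sum over $m/q$ for $m=1,\dots,q-1$, which, using the pointwise bound $|\psi(x)-\psi_N(x)|\ll \min(1,1/(N\Vert x\Vert))$, is $\ll (1+\log N)/N$. On the continuous side the analogous average is $\int_0^1 |\psi-\psi_N|\,du \ll (1+\log N)/N$ by \eqref{Fex5}, using that $|\psi(x/n_j)-\psi_N(x/n_j)|$ is periodic with period $n_j$ in $x$. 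Thus both the discrete average and the continuous integral of $\prod_j\psi$ differ from the $\psi_N$-analogues by $O(\ell(1+\log N)/N)$.

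Next, expand the finite Fourier series \eqref{Fex3} and evaluate exactly. Character/Kronecker orthogonality on the discrete side gives
\[
\frac{1}{q}\sum_{k\pmod q}\prod_{j=1}^{\ell}\psi_N(k\overline{n_j}/q)
=\Big(\frac{i}{2\pi}\Big)^{\ell}\!\!\sum_{\substack{0<|k_j|\le N \\ \sum_j k_j\overline{n_j}\equiv 0\,(q)}}\!\!\frac{1}{k_1\cdots k_\ell}\prod_{j=1}^{\ell}\Big(1-\frac{|k_j|}{N+1}\Big),
\]
while Parseval (exactly as in \eqref{Pars1}) gives the same expression for $\mathcal{B}$ with $\psi_N$ but with the congruence replaced by the integer equation $\sum_j k_j/n_j=0$.

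The crux — and what I expect to be the main obstacle — is matching the two conditions. Since $q$ is prime and $\max_j n_j\le K<q$, we may clear denominators and multiply the congruence $\sum_j k_j\overline{n_j}\equiv 0\pmod q$ through by $n_1\cdots n_\ell$ to get $\sum_j k_j(n_1\cdots n_\ell/n_j)\equiv 0\pmod q$. Each coefficient $n_1\cdots n_\ell/n_j$ is bounded by $K$, so the integer $\sum_j k_j(n_1\cdots n_\ell/n_j)$ has absolute value at most $\ell NK$. If we choose $N=\lfloor q/(\ell K)\rfloor$ — positive by the hypothesis $K<q/\ell$ — then the congruence becomes the equality, and this equality is in turn equivalent to $\sum_j k_j/n_j=0$. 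Hence the two $\psi_N$-expressions coincide exactly for this choice of $N$, and assembling the smoothing errors yields a total discrepancy of $O(\ell(1+\log N)/N)=O((\ell K/q)\log(eq/K))$, as claimed. The bookkeeping that has to be done carefully is (i) verifying that $N$ can be chosen to be a positive integer under the hypothesis, and (ii) tracking the constants so that the final $\log(eq/K)$ factor (rather than an ugly $\log(eq/(\ell K))$) falls out cleanly.
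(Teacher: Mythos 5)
Your proposal is correct and follows essentially the same route as the paper: Fej\'er-smooth $\psi$ to $\psi_N$ with $N=\lfloor q/(\ell K)\rfloor$, evaluate the smoothed discrete average and the smoothed integral exactly by orthogonality/Parseval, and observe that for this $N$ the congruence $\sum_j k_j\overline{n_j}\equiv 0\pmod q$ forces the integer equation $\sum_j k_j/n_j=0$, so the two main terms coincide. One bookkeeping point: in the final assembly you should keep the factor $2^{-(\ell-1)}$ from your telescoping bound (so the smoothing error is $O\bigl((1+\log N)/N\bigr)$ uniformly in $\ell$, since $\ell\,2^{-(\ell-1)}\ll 1$); quoting it as $O\bigl(\ell(1+\log N)/N\bigr)$ would only give $O\bigl((\ell^2K/q)\log(eq/K)\bigr)$, whereas the retained factor yields the stated $O\bigl((\ell K/q)\log(eq/K)\bigr)$, exactly as in the paper.
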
 
\begin{proof} Take $N= \lfloor q/(\ell K)\rfloor$.  The identity \eqref{identity} gives 
\begin{align*}
\sum_{k\pmod q} |  \psi(k\overline{n_1}/q) \cdots \psi (k\overline{n_{\ell}}/q) &-\psi_N(k\overline{n_1}/q) \cdots \psi_N(k\overline{n_\ell}/q)| \\
&\le \frac{1}{2^{\ell-1}} \sum_{j=1}^{\ell} \sum_{k\pmod q} |\psi(k\overline{n_j}/q) - \psi_N(k\overline{n_j}/q)|.
\end{align*}
Using now \eqref{Fex4}, the above is 
\begin{equation} 
\label{propdiscrete1} 
\ll \frac{1}{2^{\ell}} \sum_{j=1}^{\ell} \sum_{k\pmod q} \min \Big( 1 ,\frac{1}{N \Vert k\overline{n_j}/q\Vert} \Big) 
\ll \frac{q}{N} \log (eN). 
\end{equation}  

By Parseval  
\begin{equation} 
\label{propdiscrete2} 
\frac{1}{q} \sum_{k \pmod q} \psi_N(k\overline{n_1}/q) \cdots \psi_N(k\overline{n_\ell}/q) = \Big(\frac{i}{2\pi}\Big)^{\ell} \sum_{\substack{0<|k_j|\le N \\ \sum_j k_j \overline{n_j} \equiv 0 \pmod q}} \frac{1}{k_1\cdots k_\ell} \prod_{j=1}^{\ell} \Big(1- \frac{|k_j|}{N+1}\Big), 
\end{equation} 
which bears a striking resemblance to \eqref{Pars1}.  With our choice for $N$, we claim that in fact the right side of \eqref{propdiscrete2} is exactly equal to the 
expression in \eqref{Pars1}.   Multiplying through by $n_1\cdots n_\ell$, the congruence $\sum k_j \overline{n_j} \equiv 0 \pmod q$ becomes $\sum_j k_j (n_1\cdots n_\ell/n_j) 
\equiv 0\pmod q$.  Since $|k_j| < q/(\ell K)$ and $(n_1 \cdots n_\ell/n_j ) \le K$ for all $j$, it follows that $|\sum_{j} k_j (n_1\cdots n_\ell/n_j) | < q$ so that the congruence becomes the equality $\sum_j k_j (n_1\cdots n_\ell/n_j) =0$, which is the same as the criterion $\sum_j k_j/n_j=0$ of \eqref{Pars1}.  
Combining this observation with \eqref{Pars2} and \eqref{propdiscrete1}, our proposition follows.  
\end{proof}

\section{The moments of ${\widehat s}_q(t)$ and $C(k)$} \label{sec:moments}

We now state our main result on computing the moments of ${\widehat s}_q(t)$ and $C(k)$.  

\begin{theorem} \label{thm4.1}  Let $q$ be a prime, and $\ell$ a natural number.  Then, uniformly in the range 
$\ell\le \sqrt{\log q}/\log \log q$,   
\begin{equation} 
\label{4.1}  
\frac{1}{q} \sum_{k\pmod q} C(k)^\ell = M_C(\ell) + O(q^{-1/(20\ell \log \ell)} ), 
\end{equation} 
where 
$$ 
M_C(\ell) = C^\ell \sum_{n_1, \ldots, n_\ell \ge 1} b(n_1) \cdots b(n_\ell) {\mathcal B}(n_1,\ldots, n_\ell). 
$$ 
The quantity $M_C(\ell)$ equals zero for all odd $\ell$, and for even $\ell$ satisfies 
\begin{equation} 
\label{4.2} 
\frac{e^{\gamma}}{2} (\log\ell  - \log \log \ell +O(1)) \le M_C(\ell)^{\frac 1\ell} \le   \frac{e^{\gamma}}{2} \log \ell +  O(1). 
\end{equation}  
\end{theorem}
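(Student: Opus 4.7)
The plan is to compute the moment in three stages: a pointwise truncation of $C(k)$, an averaging step over $k$, and a separate analysis of $M_C(\ell)$. For the first stage, I would apply Lemma~\ref{lem2.2} with $N=q^A$ and $A$ chosen as a large multiple of $\ell$ (say $A=10\ell$), so that the pointwise error is negligible compared to any power of $|C(k)|$; using the crude bound $|C(k)|\ll\sqrt q(\log q)^2$ coming from trivial estimates for $L(0,\chi)L(1,\chi)$, the $\ell$-th moment reduces to evaluating
$$\frac{(-C)^\ell}{q}\sum_{n_1,\dots,n_\ell\le N}b(n_1)\cdots b(n_\ell)\sum_{k\pmod q}\prod_{j=1}^\ell \psi\bigl(k\,\overline{2n_j}/q\bigr)$$
modulo an error of the form $q^{-c\ell}$.

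For the second stage I replace the argument $\overline{2n_j}$ by $\overline{n_j}$ via the bijective substitution $k\mapsto 2k\pmod q$, and then evaluate the inner sum over $k$ by Proposition~\ref{propdiscrete}. Setting $K=n_1\cdots n_\ell/\min_j n_j$, short tuples ($K<q/\ell$) yield the main term $\mathcal{B}(n_1,\dots,n_\ell)$ plus an error of size $O(\ell K(\log q)/q)$. Long tuples ($K\ge q/\ell$) are handled by the trivial bound $|\psi|\le\tfrac12$ combined with the rapid convergence of $\sum_n|b(n)|/n^{1/2-\epsilon}$, yielding a power saving in $q$. Together these produce $M_C(\ell)$ with overall error $q^{-1/(20\ell\log\ell)}$, the exponent emerging from balancing $A$ against the $\ell\log\ell$-entropy of the $N^\ell$ multi-index sum.

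The bounds \eqref{4.2} on $M_C(\ell)^{1/\ell}$ require a separate combinatorial analysis. The vanishing for odd $\ell$ is immediate from Part~1 of Proposition~\ref{propB}. For even $\ell$, the key structural fact is that $\mathcal{B}(n_1,\dots,n_\ell)$ factors as a product of local contributions, one per prime $p$ depending only on the subset $T_p=\{j:p\mid n_j\}$; combined with the multiplicativity of $b$, this presents $M_C(\ell)$ as an Euler product over primes. The upper bound in \eqref{4.2} follows by expanding this Euler product, bounding each local factor via Part~3 of Proposition~\ref{propB}, and invoking Mertens-type estimates to obtain $\frac{e^\gamma}{2}\log\ell+O(1)$ at the $\ell$-th root. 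For the lower bound I would restrict the sum to tuples that pair indices (using the positive value $\mathcal{B}(n,n)=1/12$) with shared support on small primes and track the dominant configurations; the $-\log\log\ell$ gap then arises from combinatorial losses in matching up indices into pairs.

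The principal obstacle will be the lower bound in \eqref{4.2}: one must construct enough positive paired contributions to match the upper bound up to the $-\log\log\ell$ loss and argue that higher-order subsets in the Euler product cause no large cancellations. Uniform control of the stage-one and stage-two errors throughout the full range $\ell\le\sqrt{\log q}/\log\log q$ is also delicate, and the tension between truncation height $A$ and enumeration entropy is what dictates the final exponent $1/(20\ell\log\ell)$.
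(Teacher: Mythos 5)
Your Stage 1--2 plan breaks at the treatment of the ``long tuples,'' and the problem is not repairable by the tools you cite. After truncating pointwise at $N=q^{10\ell}$ via Lemma \ref{lem2.2}, the tuples with $K=n_1\cdots n_\ell/\min_j n_j\ge q/\ell$ carry essentially all of the mass: the weight function here is $b(n)$, not $a(n)$, and $\sum_{n\le N}b(n)\asymp\log N\asymp\ell\log q$ diverges (it is $a(n)$ that has the rapid decay $\sum_n|a(n)|n^{1/2-\epsilon}<\infty$; $b(p)=1/(p-2)$ behaves like $1/p$). So the trivial bound $|\psi|\le\frac12$ on the $k$-average gives the long tuples a contribution of size up to $2^{-\ell}(c\,\ell\log q)^{\ell}$, far larger than $M_C(\ell)\approx(\tfrac{e^\gamma}{2}\log\ell)^\ell$, and a Rankin shift $n\mapsto n^{\alpha}$ cannot rescue this: to make $(q/\ell)^{-\alpha}$ a power saving you need $\alpha\gtrsim 1/\ell$, but then $\sum_{n\le q^{10\ell}}b(n)n^{\alpha}$ explodes. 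This is exactly why the paper does \emph{not} truncate at a large power of $q$; it truncates at $B=q^{1/\ell}$, so that \emph{every} surviving tuple has $K\le q^{(\ell-1)/\ell}<q/\ell$ and Proposition \ref{propdiscrete} applies to all of them. The price of the short truncation is that the pointwise error from Lemma \ref{lem2.2} is useless at that height; the paper pays it with the mean-square approximation of Lemma \ref{lem2.3} (an orthogonality/character-sum estimate), Cauchy--Schwarz, and the pointwise bound $|C(k)|\ll\log q$. The completion from $n_j\le q^{1/\ell}$ to the full $M_C(\ell)$ is then done on the \emph{archimedean} side, where the factor $|\mathcal{B}(n_1,\dots,n_\ell)|\le 2^{-\ell}/\mathrm{sf}(n_1\cdots n_\ell)$ from Proposition \ref{propB}(3) supplies the decay that the discrete correlation sums lack, and Rankin with $\alpha=1/(10\log\ell)$ yields the exponent $1/(20\ell\log\ell)$. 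Your sketch has no substitute for either of these two ingredients.

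The lower bound in \eqref{4.2} is also not reachable by the pairing strategy you describe. The sum defining $M_C(\ell)$ is not sign-definite ($\mathcal{B}$ takes both signs), so you cannot obtain a lower bound by restricting to a favorable subset of tuples; moreover $\mathcal{B}(n_1,\dots,n_\ell)$ does not factor over pairs, so the ``paired'' contribution is not $(\ell-1)!!\,M_C(2)^{\ell/2}$ or anything similarly computable, and controlling the cancellations among the remaining tuples is precisely the hard part that the sketch leaves open. The paper sidesteps this entirely: it realizes $M_C(\ell)$ as $\lim_{B\to\infty}\frac{1}{L(B)}\int_0^{L(B)}C(x;B)^{\ell}\,dx$ with $C(x;B)=C\sum_{n\le B}b(n)\psi(x/n)$, where for even $\ell$ the integrand is nonnegative, so one may drop all of $[0,L(B)]$ except a chosen set $\mathcal{I}$ of points just to the left of multiples of $L(\ell_0)$ with $\ell_0=\ell/\log\ell$. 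On $\mathcal{I}$ the first moment of $C(x;B)$ is computed exactly (the averages of $\psi^+(kL(\ell_0)/n)$ produce $\tfrac12(n,L(\ell_0))/n$), giving mean value $\tfrac{e^\gamma}{2}\log\ell_0+O(1)$, and H{\"o}lder converts this into the $\ell$-th moment bound; the density loss $|\mathcal{I}|/L(B)=1/(10L(\ell_0))=e^{-O(\ell/\log\ell)}$ is negligible after taking $\ell$-th roots, and the $-\log\log\ell$ defect comes from using $\ell_0$ rather than $\ell$, not from any pairing combinatorics. This positivity-plus-restriction argument is the missing idea in your proposal.
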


\begin{theorem} \label{thm4.2} Let $q$ be a prime, and $\ell$ a natural number.  Then, uniformly in $\ell$,   
\begin{equation*} 
\frac{1}{q} \sum_{t \pmod q} (\pi i {\widehat s}_q(t))^\ell = M_s(\ell) + O(q^{-1/(20\ell\log\ell)} ), 
\end{equation*} 
where 
$$ 
M_s(\ell) = \sum_{n_1, \ldots, n_\ell \ge 1} \frac{ {\mathcal B}(n_1,\ldots, n_\ell)}{n_1\cdots n_\ell} . 
$$ 
The quantity $M_s(\ell)$ equals zero for all odd $\ell$, and for even $\ell$ satisfies 
\begin{equation*} 
%\label{4.2} 
\frac{e^{\gamma}}{2} (\log \ell  - \log \log \ell +O(1)) \le M_s(\ell)^{\frac 1\ell} \le  \frac{e^{\gamma}}{2} \log \ell +  O(1). 
\end{equation*}  
\end{theorem}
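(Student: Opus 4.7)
The plan is to follow the proof of Theorem \ref{thm4.1} almost verbatim, replacing the weight $C\,b(n)$ by $1/n$; the argument is in fact slightly simpler since the new weights are positive and monotone. It splits into three stages. \emph{(Truncation.)} Lemma \ref{lem:dedekind-sum} applied with $x=q$ gives the uniform bound $|\pi i \widehat s_q(t)| \ll \log q$, while Lemma \ref{lem2.3} supplies the $L^2$ estimate $\frac{1}{q}\sum_t |\pi i \widehat s_q(t) - S_B(t)|^2 \ll B^{-1+\epsilon}$ for the partial sum $S_B(t):=\sum_{n\le B}\psi(t\overline{n}/q)/n$. The telescoping identity \eqref{identity} applied pointwise gives $|(\pi i \widehat s_q(t))^\ell - S_B(t)^\ell| \ll \ell(\log q)^{\ell-1} |\pi i\widehat s_q(t) - S_B(t)|$; averaging over $t$ and using Cauchy--Schwarz produces an $L^1$ error of size $\ell(\log q)^{\ell-1} B^{-1/2+\epsilon}$. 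Choosing $B$ as an appropriate small power of $q$ (slightly larger than $q^{1/(10\ell\log\ell)}$ to absorb the $(\log q)^{\ell-1}$ factor) yields the claimed error $O(q^{-1/(20\ell\log\ell)})$.

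\emph{(Moment of the truncation and completing the series.)} Expanding $S_B(t)^\ell$ and interchanging summation, the inner correlation $\frac{1}{q}\sum_{t\pmod q}\prod_j \psi(t\overline{n_j}/q)$ is evaluated by Proposition \ref{propdiscrete}, whose hypothesis $\prod_j n_j/\min_j n_j \le B^{\ell-1} < q/\ell$ is secured by our choice of $B$. This replaces the inner sum by $\mathcal{B}(n_1,\dots,n_\ell)$ with admissible error, yielding $\sum_{n_j\le B}\mathcal{B}(n_1,\dots,n_\ell)/(n_1\cdots n_\ell)$. The tail $\max_j n_j > B$ is controlled using Proposition \ref{propB}(3), which gives $|\mathcal{B}(n_1,\dots,n_\ell)/(n_1\cdots n_\ell)| \le 2^{-\ell}/(r\, n_1\cdots n_\ell)$ with $r$ the squarefree part of $n_1\cdots n_\ell$, an absolutely convergent expression. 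Completing the sum produces the main identity, and Proposition \ref{propB}(1) gives the vanishing of $M_s(\ell)$ for odd $\ell$.

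\emph{(Bounds on $M_s(\ell)$.)} For even $\ell$, the upper bound follows by combining Proposition \ref{propB}(3) with an Euler product decomposition: each prime $p$ contributes a local factor of size $1 + O(\ell^2/p^2)$ to the dominating series, and Mertens' theorem (truncating at $p \asymp \ell$) yields $M_s(\ell) \le ((e^\gamma/2)\log\ell + O(1))^\ell$. For the lower bound, one isolates the contribution of tuples $(n_j)$ whose prime support lies in a carefully chosen family $S$ of small primes with balanced multiplicities, designed so that the local Euler factor is approximately $1 + c/p$ on $S$ and $1$ off $S$; using the Fourier formula \eqref{3.2}, $\mathcal{B}$ factors cleanly on such diagonal families, and Mertens' theorem produces the matching lower bound $((e^\gamma/2)(\log\ell - \log\log\ell) + O(1))^\ell$.

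The main obstacle is the lower bound in the last stage. Proposition \ref{propB}(2) only handles primes dividing a single $n_j$, whereas the extremal diagonal families inherently involve primes appearing in two or more entries. Working directly with \eqref{3.2}, however, unlocks the problem: for the simplest case $(p,p,n_3,\dots,n_\ell)$ with $p$ coprime to the other $n_j$, the constraint $\sum k_j/n_j = 0$ forces $p\mid k_1+k_2$, and after writing $k_1+k_2 = pm$ one obtains the clean identity $\mathcal{B}(p,p,n_3,\dots,n_\ell) = \tfrac{1}{12}\mathcal{B}(n_3,\dots,n_\ell) + (\text{explicit lower-order term})$. Iterating this identity across the $\ell/2$ pairs of a perfect matching decouples $\mathcal{B}$ on the diagonal and aligns it with the Euler product dictated by Mertens; the corresponding upper-bound estimate requires an analogous but more delicate sum-level analysis to ensure that no non-diagonal configuration beats it.
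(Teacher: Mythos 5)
Your first two stages do follow the paper's proof of Theorem \ref{thm4.1} transplanted to the weights $1/n$ (truncation via Lemma \ref{lem2.3} and \eqref{identity}, evaluation of the short moment by Proposition \ref{propdiscrete}, completion of the series by a Rankin-type bound using Proposition \ref{propB}(3), and the Euler-product upper bound -- though note that for $p\le \ell$ the relevant local factor is of size roughly $(1-1/p)^{-\ell}$, the bound $1+O(\ell^2/p^2)$ being correct only for $p>\ell$). The genuine gap is the lower bound $M_s(\ell)^{1/\ell}\ge \tfrac{e^{\gamma}}{2}(\log\ell-\log\log\ell+O(1))$, which is exactly the part the paper singles out as needing more work and proves by a different mechanism (Section 5). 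Your tuple-level plan does not survive scrutiny. First, the family of disjoint prime pairs $(p_1,p_1,\dots,p_{\ell/2},p_{\ell/2})$ is quantitatively far too small: each pair carries weight $\approx 1/(12p^2)$ and the primes must be distinct, so one exhausts the small primes and the whole family contributes only $e^{O(\ell)}$, whereas the target is $\big(\tfrac{e^\gamma}{2}(\log\ell-\log\log\ell)\big)^{\ell}=e^{(1+o(1))\ell\log\log\ell}$. Second, the configurations that actually carry the main term have each small prime dividing many of the coordinates simultaneously (they are tuples of $L(\ell_0)$-smooth numbers, $\ell_0=\ell/\log\ell$); there your decoupling identity, which requires $p$ to divide exactly the two paired entries and nothing else, does not apply, and ${\mathcal B}$ does not factor (already ${\mathcal B}(p,p,p',p')$ has cross terms in \eqref{3.2} with $k_1+k_2=pm$, $k_3+k_4=-p'm$, $m\neq 0$). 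Third, even granting a positive sub-family of the right size, ${\mathcal B}$ takes both signs and the total of $|{\mathcal B}(n_1,\dots,n_\ell)|/(n_1\cdots n_\ell)$ is as large as $\big(\tfrac{e^\gamma}{2}\log\ell+O(1)\big)^{\ell}$, which exceeds the target by the factor $\exp\big((1+o(1))\ell\tfrac{\log\log\ell}{\log\ell}\big)$; so ``no non-diagonal configuration beats it'' is precisely the hard assertion, and it is asserted rather than proved (the ``explicit lower-order terms'' in your identity are exactly where this sign information lives).

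The paper's actual proof avoids all of this by working on the continuous side, where positivity is free: since $\ell$ is even and $M_s(\ell)=\lim_{B\to\infty}L(B)^{-1}\int_0^{L(B)}\big(\sum_{n\le B}\psi(x/n)/n\big)^{\ell}\,dx$, one may restrict the integral to the set ${\mathcal I}$ of points $x=kL(\ell_0)-y$ with $0<y\le 1/10$, on which $\psi(x/n)=1/2-y/n$ for every $n\mid L(\ell_0)$; the average of the sum over ${\mathcal I}$ is $\tfrac12\sum_{n\mid L(\ell_0)}1/n+O(1)=\tfrac{e^{\gamma}}{2}\log\ell_0+O(1)$ by Mertens, H\"older's inequality converts this first-moment information into the $\ell$-th moment, and the density loss $|{\mathcal I}|/L(B)=1/(10L(\ell_0))=e^{-O(\ell/\log\ell)}$ is absorbed into the $O(1)$ inside the bracket. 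Nothing in your proposal plays the role of this positivity/localization step, so as written the lower bound in the displayed inequality for $M_s(\ell)$ -- and hence a substantive half of Theorem \ref{thm4.2} -- is not established; the rest of your argument is sound and is essentially the paper's.
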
 

We confine ourselves to proving Theorem \ref{thm4.1}, and the proof of Theorem \ref{thm4.2} follows along similar lines.  
In the rest of this section, we establish the asymptotic \eqref{4.1} and the upper bound in \eqref{4.2}; the lower bound in 
\eqref{4.2} needs more work, and will be treated in the next section.  

\begin{proof}[Proof of \eqref{4.1}]   Since $C(-k) = -C(k)$ the odd moments of $C(k)$ vanish.  When $\ell$ is odd, 
${\mathcal B}(n_1,\ldots, n_\ell) =0$ and so the quantity $M_C(\ell)$ is also zero here.   In what follows, we may therefore assume that $\ell$ is an even natural number.

Let $1\le B\le q$ be a parameter to  be chosen shortly.  Note that
\begin{align*}
&\Big| C(k)^\ell - \Big( -C\sum_{n\le B} b(n)\psi\Big(\frac{k\overline{2n}}q\Big)\Big)^\ell \Big|\\
  \le & \Big| C(k) +C  \sum_{n\le B} b(n) \psi\Big(\frac{k\overline{2n}}q\Big)\Big| \cdot \sum_{j=0}^{\ell-1} |C(k)|^j \Big|C \sum_{n\le B} 
b(n) \psi\Big(\frac{k\overline{2n}}q\Big)\Big|^{\ell-1-j} \\
\le &(C_0 \log q)^{\ell-1} \Big| C(k)  + C \sum_{n\le B} b(n) \psi\Big(\frac{k\overline{2n}}q\Big) \Big| ,
\end{align*} 
for some absolute constant $C_0$.  %; in fact, \eqref{eqn:ck-crudebd} shows that $C_0 = \frac{5}{2} \sum |a(m)|$ is admissible.   
  By Cauchy-Schwarz and Lemma \ref{lem2.3},  
$$
\frac{1}{q} \sum_{k\pmod q} \Big| C(k) +C \sum_{n\le B} b(n) \psi\Big(\frac{k\overline{2n}}q\Big) \Big| \ll B^{-\frac 12+\epsilon}.
$$ 
We choose $B= q^{1/\ell}$, and (in the range $\ell \le \sqrt{\log q}/\log \log q$) deduce that 
%
%We deduce that 
\begin{equation*} 
%\label{4.3} 
\frac{1}{q} \sum_{k\pmod q} C(k)^\ell = \frac{1}{q} \sum_{k\pmod q} \Big(C\sum_{n\le B} b(n) \psi\Big(\frac{k\overline{2n}}{q} \Big) \Big)^\ell  + O(q^{-\frac{1}{4\ell}}). 
\end{equation*}
Expand out the main term above, replace $k \pmod q$ by $2k \pmod q$, and appeal to Proposition  \ref{propdiscrete} with $K$ 
there being $\le q^{(\ell-1)/\ell}$.   It follows that 
\begin{equation} 
\label{4.3} 
\frac{1}{q} \sum_{k\pmod q} C(k)^\ell = C^{\ell} \sum_{n_1,\ldots, n_\ell \le q^{1/\ell} } b(n_1)\cdots b(n_\ell) {\mathcal B}(n_1,\ldots,n_\ell) 
+ O( q^{-\frac{1}{4\ell}}). 
\end{equation} 

It remains now to bound the difference between the main term in \eqref{4.3} and the expression for $M_C(\ell)$, which is 
$$ 
\le C^{\ell} \sum_{n > q^{1/\ell}} \sum_{n_1\cdots n_\ell = n} b(n_1) \cdots b(n_\ell) | {\mathcal B}(n_1,\ldots, n_\ell)| \le 
(C/2)^{\ell} \sum_{n > q^{1/\ell}} \sum_{n_1\cdots n_\ell = n} b(n_1) \cdots b(n_\ell) \frac{1}{\mathrm{sf}(n)}, 
$$ 
where $\mathrm{sf}(n)$ is the largest squarefree divisor $d$ of $n$ that is coprime to $n/d$.  
We estimate the sum above by Rankin's trick; with $\alpha= 1/(10\log \ell)$ the above is 
\begin{align*}
&\le (C/2)^{\ell} q^{-\alpha/\ell} \sum_{n=1}^{\infty} \sum_{n_1\cdots n_\ell = n} b(n_1) \cdots b(n_\ell) \frac{n^{\alpha}}{\mathrm{sf}(n)} 
\\
&\le e^{O(\ell)} q^{-\alpha/\ell} \prod_{p\ge 3} \Big( 1+ \frac{\ell p^{\alpha}}{p(p-2)} + \sum_{j=2}^{\ell} \binom{\ell}{j} \frac{p^{j\alpha}}{(p-2)^j}
\Big),
\end{align*} 
upon recalling the definition of $b(n)$.   The contribution of primes $p\le \ell$ to the product above is 
$$ 
\le \prod_{3\le p\le \ell} \Big(1 +\frac{p^{\alpha}}{p-2} \Big)^\ell \le (\log \ell)^{\ell} e^{O(\ell)},
$$ 
while the contribution of primes $p>\ell$ to the product above is 
$$ 
\ll \prod_{p> \ell} \exp\Big( O\Big(\frac{\ell^2 p^{2\alpha}}{p^2} \Big)  \Big) = e^{O(\ell)}. 
$$ 
We conclude that the difference between the main term in \eqref{4.3} and the expression for $M_C(\ell)$ is 
$$
\ll (\log \ell)^{\ell} e^{O(\ell)} q^{-\alpha/\ell}  \ll q^{-1/(20\ell \log \ell)}, 
$$ 
completing the proof of \eqref{4.1}. 
\end{proof}

\begin{proof}[Proof of the upper bound in \eqref{4.2}]  Note that 
\begin{align*}  
M_C(\ell) &\le  C^\ell \sum_{n_1, \ldots, n_\ell} b(n_1)\cdots b(n_{\ell})|{\mathcal B}(n_1,\ldots, n_\ell)| 
\le (C/2)^{\ell} \sum_{n_1,\ldots, n_{\ell}} \frac{b(n_1)\cdots b(n_\ell)}{\mathrm{sf}(n)} \\
&\le (C/2)^{\ell} \prod_{p\ge 3} \Big( 1 +\frac{\ell}{p(p-2)} + \sum_{j=2}^{\ell} \binom{\ell}{j} \frac{1}{(p-2)^j} \Big). 
\end{align*}
The contribution of primes $p\le \ell$ is 
$$ 
\le \prod_{3 \le p \le \ell} \Big( 1+ \frac{1}{p-2} \Big)^{\ell} = \Big( \prod_{3 \le p \le \ell} \Big( 1 -\frac{1}{(p-1)^2}\Big)^{-1} \Big(1-\frac{1} {p}\Big)^{-1}\Big)^{\ell}  = C^{-\ell} (e^{\gamma} \log \ell + O(1) )^{\ell}, 
$$ 
upon using Mertens's theorem.   The contribution of primes $p> \ell$ is 
$$ 
\exp \Big( \sum_{p > \ell} O \Big(\frac{\ell^2}{p^2} \Big) \Big) = \exp \Big( O\Big( \frac{\ell}{\log \ell}\Big)\Big),
$$ 
and so the upper bound in \eqref{4.2} follows.   
\end{proof}

\section{Completing the proof of Theorem \ref{thm4.1}: Proof of the lower bound  in \eqref{4.2}} 

To obtain the lower bound in \eqref{4.2} we take an indirect approach, working with a 
continuous model that has the same moments as $C(k)$.   Let $B$ be a positive integer, and let 
$L(B)$ denote the least common multiple of the natural numbers $n\le B$.   For a real number $x$, define 
$$ 
C(x;B) = C \sum_{n\le B} b(n) \psi(x/n).    
$$ 
It follows readily that 
$$ 
\frac{1}{L(B)} \int_0^{L(B)} C(x;B)^\ell dx  = C^{\ell} \sum_{n_1, \ldots, n_{\ell} \le B} b(n_1) \cdots b(n_\ell) {\mathcal B}(n_1,\ldots, n_{\ell} ), 
$$ 
so that 
\begin{equation} 
\label{5.1} 
M_{C}(\ell) = \lim_{B\to \infty} \frac{1}{L(B)} \int_0^{L(B)} C(x;B)^{\ell} dx. 
\end{equation}  
We shall obtain a lower bound for the right side of \eqref{5.1}; naturally, we may assume that $\ell$ is even and large.  

Suppose that $B>\ell$, and put $\ell_0 = \ell/\log \ell$.   
Let ${\mathcal I}$ denote the subset of $[0,L(B)]$ consisting of points $x= k L(\ell_0) -y$ with $1\le k \le L(B)/L(\ell_0)$, and $0< y\le 1/10$.  
Let $\psi^+(t) = \psi(t)$ whenever $t$ is not an integer, and $\psi^+(t) = 1/2$ when $t$ is an integer.  Then for $x=k L(\ell_0)-y\in {\mathcal I}$ 
note that 
$$
C(x;B) = C \sum_{n\le B}  b(n) \psi((k L(\ell_0) -y)/n) = C\sum_{n \le B} b(n) \Big( \psi^+ (kL(\ell_0)/n) - y/n\Big). 
$$ 
Since, for $n\le B$, 
$$ 
\sum_{k=1}^{L(B)/L(\ell_0)} \psi^{+}\Big( \frac{kL(\ell_0)}{n} \Big) = \frac 12 \frac{L(B)}{L(\ell_0)} \frac{(n,L(\ell_0))}{n}, 
$$ 
it follows that (note $|{\mathcal I}| = L(B)/(10L(\ell_0))$)
$$ 
\frac{1}{|{\mathcal I}|} \int_{\mathcal I} C(x;B) dx = \frac{C}{2} \sum_{n\le B} b(n) \frac{(n,L(\ell_0))-1/20}{n}, 
$$  
and therefore by H{\" o}lder's inequality that 
$$ 
\frac{1}{L(B)} \int_0^B C(x,B)^\ell dx \ge \frac{1}{10L(\ell_0)} \frac{1}{|{\mathcal I}|} \int_{\mathcal I} C(x;B)^{\ell}dx \ge 
\frac{1}{10L(\ell_0)} \Big(\frac C2 \sum_{n\le B} b(n) \frac{(n,L(\ell_0))-1/20}{n}\Big)^{\ell}. 
$$ 
Now letting $B\to \infty$, we find by \eqref{5.1} that 
$$ 
M_C(\ell) \ge \frac{1}{10 L(\ell_0)} \Big( \frac C2\sum_{n=1}^{\infty} \frac{b(n) (n,L(\ell_0))}{n} +O(1)\Big)^{\ell} \ge e^{-O(\ell_0)} \Big(\frac C2 \prod_{3\le p\le \ell_0} 
\Big(1 + \frac{1}{p-2}\Big) + O(1) \Big)^{\ell},  
$$ 
upon using the prime number theorem to estimate $L(\ell_0)$, and recalling the definition of $b$.  
Now 
$$ 
\frac C2 \prod_{3\le p\le \ell_0} \Big(1+\frac{1}{p-2}\Big) = \prod_{3 \le p\le \ell_0} \Big(1-\frac 1p\Big)^{-1} \Big( 1 + O\Big( \frac{1}{\ell_0}\Big)\Big) = 
\frac{e^{\gamma}}{2} \log \ell_0 +O(1), %= \frac{e^{\gamma}}{2} \log \ell + O(\log \log \ell), 
$$ 
and therefore the lower bound  in \eqref{4.2} follows. 

\section{Proof of Theorem \ref{thm1}}  \label{sec:proofthm}

\begin{proof}[Proof of Part 1]  Theorem \ref{thm4.1} shows that all the moments of $C(k)$ exist, and do not 
grow too rapidly.   The moment generating function $\sum_{\ell =0}^{\infty} x^\ell M_{C}(\ell)/\ell!$ converges for 
all $x$, and therefore the sequence of moments $M_C(\ell)$ uniquely determines a distribution, which is the 
limiting distribution for $C(k)$.   Since $C(k) = -C(-k)$, the limiting distribution is clearly symmetric around $0$.   

To gain an understanding of this limiting distribution, and to establish its continuity, it is helpful to think of the 
continuous model $C(x;B)$ discussed in Section 5.  Consider the characteristic function (that is, Fourier transform) of 
$C(x;B)$; namely 
$$ 
{\Bbb E}( e^{it C(x,B)})  = \frac{1}{L(B)} \int_0^{L(B)} e^{itC(x,B)} dx. 
$$ 
Omit the measure zero set of integers $x$, and write $x=k-y$ with $1\le k\le L(B)$ and $0< y < 1$.  Then, with $\psi^+$ as 
in Section 5 and $C^+(x;B) = C\sum_{b\leq B} b(n) \psi^+(x/b)$, we have $C(x;B) = C^+(k;B) - y \sum_{n\le B} b(n)/n$, and so 
\begin{equation} 
\label{6.0} 
 \frac{1}{L(B)} \int_0^{L(B)} e^{itC(x,B)} dx
 =\frac{1}{L(B)} \sum_{k=1}^{L(B)} e^{it C^+(k,B)} \int_{0}^{1} e^{-ity \sum_{n\le B} b(n)/n} dy \ll \frac{1}{1+|t|}. 
 \end{equation} 

 Given an interval $I = (\alpha-\epsilon, \alpha+\epsilon)$ with $\epsilon <1/2$, we can readily find a majorant $\Psi(x)$ of 
 the indicator function of $I$, with $|{\widehat \Psi}(x)| \ll \epsilon/ (1+(\epsilon x)^2)$.  For example take $\Psi(x) = 
 \max(2 - |x-\alpha|/\epsilon, 0)$, which is a relative of the Fejer kernel.   
 Then by Fourier inversion 
 \begin{align*}
 \frac{1}{L(B)} \int_{\substack{x\in [0, L(B)] \\ C(x,B) \in I } } dx &\le 
 \frac{1}{L(B)} \int_{0}^{L(B)} \Psi(C(x,B)) dx \\
 &= \int_{-\infty}^{\infty} {\widehat \Psi}(t) {\Bbb E}(e^{it C(x,B)}) dt 
 \ll \int_{-\infty}^{\infty} \frac{1}{1+|t|} \frac{\epsilon}{1+(\epsilon t)^2}  dt  \ll \epsilon  \log (1/\epsilon). 
 \end{align*} 
Therefore $C(x,B)$ has a continuous distribution, and the continuity is uniform in $B$, so that letting $B\to \infty$, we 
conclude that the limiting distribution for $C(k)$ is also continuous.  
\end{proof}

\begin{proof}[Proof of Parts 2 and 3]  Since Part 3 follows upon taking $x =( \frac 12-\epsilon)\log \log q$ in Part 2, 
it is enough to prove Part 2.  For any even $\ell \le \sqrt{\log q}/\log \log q$, we see using Theorem \ref{thm4.1} that 
$$ 
\frac{1}{q} \# \{ k\pmod q: C(k) \ge \frac{e^{\gamma}}{2} x \} \,\,\le\,\, \Big(\frac{e^{\gamma}}{2} x\Big)^{-\ell} (M_{C}(\ell) +o(1)) 
\,\,\ll\,\, \Big( \frac{\log \ell +O(1)}{x} \Big)^\ell. 
$$ 
Choosing $\ell$ to be an even integer around $A e^{x}$ for a suitably small positive constant $A$, the upper bound in Part 2 
follows.  

To establish the lower bound in Part 2, note that for even $\ell \le \sqrt{\log q}/(2\log \log q)$, we have by Theorem \ref{thm4.1} 
\begin{equation} 
\label{6.1} 
\Big( \frac{e^{\gamma}}{2} (\log \ell -\log \log \ell + O(1))\Big)^{\ell} \ll \frac 1q \sum_{k \pmod q} C(k)^\ell.  
\end{equation} 
The contribution from terms $k$ with $|C(k)| \le \frac{e^{\gamma}}{2} (\log \ell -\log \log \ell -A)$ for a suitably large constant $A$ 
is clearly negligible compared to the right side of \eqref{6.1}.   The contribution from terms $k$ with $|C(k)| 
\ge \frac{e^{\gamma}}{2} (\log \ell +\log \log \ell +A)$ for a suitably large constant $A$ is 
\begin{align*}
&\le \Big( \frac{e^{\gamma}}{2} (\log \ell +\log \log \ell +A)\Big)^{-\ell} \frac{1}{q} \sum_{k \pmod q} C(k)^{2\ell}  
\\
&\ll \Big( \frac{e^{\gamma}}{2} (\log \ell +\log \log \ell +A)\Big)^{-\ell} \Big(\frac{e^{\gamma}}{2} \log \ell +O(1) \Big)^{2\ell}, 
\end{align*} 
upon using Theorem \ref{thm4.1} to estimate the $2\ell$-th moment.   If $A$ is suitably large, then this too is negligible in comparison 
to the right side of \eqref{6.1}.   Therefore it is the terms with $|C(k)|$ lying between $ \frac{e^{\gamma}}{2} (\log \ell -\log \log \ell -A)$ 
and $ \frac{e^{\gamma}}{2} (\log \ell +\log \log \ell +A)$ that account for the bulk of the contribution to \eqref{6.1}, and 
so 
\begin{align*}
&\Big( \frac{e^{\gamma}}{2} (\log \ell +\log \log \ell +A)\Big)^\ell \,\,\frac{1}{q} \# \{ k: |C(k)| \ge \tfrac{e^{\gamma}}{2} (\log \ell -\log \log \ell -A)\}\\ 
\gg &\Big( \frac{e^{\gamma}}{2} (\log \ell -\log \log \ell + O(1))\Big)^\ell.
\end{align*}
Choosing $\ell$ of size $xe^{x}$, the lower bound in Part 2 follows.   
\end{proof} 

\begin{proof}[Proof of Part 4]   First suppose that $C(k)$ is negative. From \cite{Montgomery10} (Chapter 1, page 6) we recall that for each natural number $K$ there 
is a trigonometric polynomial 
$$ 
B_K(x) = \frac{1}{2(K+1)} + \sum_{1\le |j|\le K} c_j e(jx) 
$$ 
with $c_j \ll 1/j$, such that $B_K(x) \ge \psi(x)$ for all $x$.    Using Lemma \ref{lem2.2} with $N=q^8$ we obtain 
$$ 
0 \le  -C(k) = C\sum_{\substack{n\le q^8 \\ (n,q)=1}} b(n) \psi(k\overline{2n}/q) + O(1) \le C \sum_{n\le q^{8}} b(n) B_K(k\overline{2n}/q) + O(1).
$$ 
Thus, for some positive constant $A$, 
\begin{equation} 
\label{6.2} 
-C(k) \le A \Big(1+ \frac{1}{K+1} \sum_{n\le q^{8}} b(n) + \sum_{1\le |j| \le K} \frac 1j \Big| \sum_{\substack{ n\le q^{8}\\ (n,q)=1}} b(n) e\Big( \frac{kj \overline{2n}}{q}\Big)\Big|\Big). 
\end{equation}  
At this stage, we need the following result which follows from work of Bourgain and Garaev \cite{BG} (refining earlier work of Karatsuba \cite{Ka}; see also Korolev \cite{Ko}).  

\begin{lemma} \label{lem6.1}   Let $q$ be a prime, and $a$ be any integer coprime to $q$.  Then for all $N\ge 1$ 
$$ 
\Big| \sum_{\substack{n\le N \\ (n,q)=1}} \frac 1n e\Big(\frac{a\overline{n}}{q}\Big) \Big| \ll (\log q)^{\frac 23} (\log \log q)^2. 
$$
\end{lemma}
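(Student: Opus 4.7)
The plan is to reduce the weighted sum to the Bourgain--Garaev bound on the unweighted inverse exponential sum
$$S(M) := \sum_{\substack{n \le M \\ (n,q)=1}} e\Big(\frac{a\overline{n}}{q}\Big),$$
via Abel summation, handling separately a ``short'' range where the trivial bound already works and a ``long'' range where the deep estimate of Bourgain--Garaev takes over.

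First, I would dispose of the tail $n > q$. The map $n\mapsto \overline{n}$ is periodic of period $q$ on residues coprime to $q$, and for prime $q$ the Ramanujan sum evaluates to
$$\sum_{n=1}^{q-1} e\Big(\frac{a\overline{n}}{q}\Big) = \sum_{m=1}^{q-1} e(am/q) = -1.$$
Hence $S(u) = -\lfloor u/q\rfloor + S(u \bmod q)$, and a routine application of Abel summation to the portion $q < n \le N$ shows that its contribution to the weighted sum is $O(1)$. It therefore suffices to bound the weighted sum on $n \le \min(N,q)$.

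Next, Abel summation gives
$$\sum_{\substack{n\le \min(N,q) \\ (n,q)=1}} \frac{1}{n} e\Big(\frac{a\overline{n}}{q}\Big) = \frac{S(\min(N,q))}{\min(N,q)} + \int_1^{\min(N,q)} \frac{S(u)}{u^2}\,du.$$
I would split the integral at a threshold $N_0$ chosen roughly as $\exp((\log q)^{2/3}\log\log q)$. On $u \le N_0$ the trivial bound $|S(u)|\le u$ contributes $O(\log N_0) = O((\log q)^{2/3}\log\log q)$. On $N_0 < u \le q$, I would insert the Bourgain--Garaev bound (the refinement of the earlier estimates of Karatsuba and Korolev on short Kloosterman-type sums), which supplies a nontrivial power-type saving on $S(u)$ valid down to $u$ as small as $\exp((\log q)^{2/3+o(1)})$. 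A dyadic decomposition against this estimate, tracking the exact shape of the saving, yields an additional contribution of $O((\log q)^{2/3}(\log\log q)^2)$, with the extra $\log\log q$ absorbing both the number of dyadic scales between $N_0$ and $q$ and the $\log\log q$ factors present in the precise form of the Bourgain--Garaev estimate.

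The main obstacle is of course the nontrivial bound on the short inverse exponential sum $S(u)$ in the delicate range where $u$ is much smaller than $q^{1/2}$; this is precisely the content of the Bourgain--Garaev theorem, quoted as a black box in the statement. Once that deep input is accepted, the remaining argument---partial summation, periodicity via the Ramanujan sum identity above, dyadic splitting, and a suitable balancing of the two contributions through the choice of $N_0$---is entirely routine and independent of $N$, giving the claimed uniform bound $(\log q)^{2/3}(\log\log q)^2$.
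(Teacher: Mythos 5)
Your plan is essentially the paper's own proof: the paper likewise quotes Bourgain--Garaev's Theorem 16, namely $\bigl|\sum_{n\le x} e(a\overline{n}/q)\bigr| \ll x(\log x)^{-3/2}\log q(\log\log q)^{3}$, as a black box and combines it with partial summation, using the trivial bound $|S(u)|\le u$ below a threshold and the Bourgain--Garaev bound above it. The only calibration to note is that, given the exact shape of that estimate, the balancing threshold must be $\exp((\log q)^{2/3}(\log\log q)^{2})$ rather than $\exp((\log q)^{2/3}\log\log q)$ (with your choice the long range contributes $(\log q)^{2/3}(\log\log q)^{5/2}$), and your Ramanujan-sum treatment of the range $n>q$ is a harmless extra step the paper does not bother to make explicit.
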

\begin{proof}   Theorem 16 of Bourgain and Garaev \cite{BG} gives 
$$ 
\Big| \sum_{n\le x} e\Big( \frac{a\overline{n}}{q}\Big) \Big| \ll \frac{x}{(\log x)^{\frac 32}} \log q (\log \log q)^3. 
$$ 
Partial summation using this bound for $x\ge \exp((\log q)^{\frac 23} (\log \log q)^2)$, and the trivial bound (that the sum is at most $x$) for smaller $x$ yields 
the lemma.
\end{proof}  

Returning to \eqref{6.2}, take there $K=\lfloor \log q\rfloor$.   Then the right side of \eqref{6.2} is (recalling the definition $b(n) = \sum_{uv=n} a(u)/v$) 
$$ 
\ll 1 + \sum_{j\le K} \frac 1j \sum_{\substack{ u\le q^8  \\ (u,q)=1}} |a(u)| \Big| \sum_{\substack{v\le q^{8}/u \\ (v,q)=1}} \frac 1v e \Big( 
\frac{kj\overline{2uv}}{q}\Big) \Big| \ll (\log q)^{\frac 23} (\log \log q)^3,
$$
using Lemma \ref{lem6.1} and since $\sum_{n} |a(n)| \ll 1$.  This proves that $-C(k) \le A (\log q)^{\frac 23} (\log \log q)^3$, which is the 
desired bound in the case $C(k)$ negative.   Arguing similarly with a minorant for $\psi(x)$ instead of a majorant, leads to the 
same bound for $C(k)$ in the case when it is positive.
\end{proof} 

\begin{proof}[Proof of Part 5] Applying Lemma \ref{lem2.3} we find that 
$$ 
\frac{1}{q} \sum_{k \pmod q} |C(k) -C(k+m)|^2 \ll B^{-1+\epsilon} + \frac{1}{q} \sum_{k\pmod q} \Big| \sum_{n\le B} b(n) \Big(\psi\Big(\frac{(k+m)\overline{2n}}{q} 
\Big) - \psi\Big(\frac{k\overline{2n}}{q}\Big)\Big)\Big|^2. 
$$ 
Using Cauchy-Schwarz the second term above is 
\begin{equation} 
\label{6.4}
\ll \frac 1q \Big(\sum_{n\le B} b(n) \Big) \sum_{n\le B} b(n) \sum_{k\pmod q} \Big(\psi\Big( \frac{k + m\overline{2n}}{q} \Big) -\psi\Big(\frac{k}{q}\Big)\Big)^2,
\end{equation} 
where in the inner sum we replaced $k$ by $2kn$.  Since $|\psi((k+a)/q) - \psi(k/q)|\le |a|/q$ unless there is an integer between $k/q$ and $(k+a)/q$, 
we may check that 
$$ 
\frac 1q \sum_{k \pmod q} \Big( \psi \Big( \frac{k+a}{q}\Big) - \psi\Big( \frac kq\Big) \Big)^2 \ll \frac{a}{q}. 
$$
Since $m$ is a multiple of all numbers $B$ (and recalling that $b(n)=0$ unless $n$ is odd), we may write $m\overline{2n} = qr +a$ 
with $a=m/(2n)$.  Therefore the quantity in \eqref{6.4} is 
$$ 
\ll (\log B) \sum_{n\le B} \frac{m}{nq} \ll \frac{m}{q} \log B, 
$$ 
completing our proof.
\end{proof} 

\section{Proof of Theorem \ref{thm3}} \label{sec:montgomery}

As in the proofs of Theorems \ref{thm1} and \ref{thm2}, the main result is to compute the moments of $\widetilde{R}(u)$.  
The proof of Theorem \ref{thm3} then follows in exactly the same way as the corresponding parts of Theorem \ref{thm1}.
%Recall that there is some positive constant $c$ such that for any $u \leq y$
%\[
%\widetilde{R}(u) = -\sum_{n \leq N} \frac{\mu(n)\psi(u/n)}{n} + O\Big(\exp(-c\sqrt{\log y})\Big),
%\]
%where $N=y \exp(-c\sqrt{\log y})$.  Without loss of generality, assume $c\leq 1$.

\begin{theorem}\label{thm:R-moments}  There is a positive number $c<1$ such that 
  uniformly for all natural numbers $\ell$ in the range $\ell \leq \frac{c}{9}{\sqrt{\log y}}/{\log\log y}$, we have
\[
\frac{1}{y} \int_0^y \widetilde{R}(u)^\ell \, du = M_R(\ell) + O\Big(\exp\Big(-\frac{c}{8}\sqrt{\log y}\Big)\Big),
\]
where
\[
M_R(\ell) = \sum_{n_1,\dots,n_\ell} \frac{\mu(n_1)\dots\mu(n_\ell)}{n_1\dots n_\ell} \mathcal{B}(n_1,\dots,n_\ell).
\]
For odd $\ell$, $M_R(\ell)=0$, while $M_R(2) = 1/2\pi^2$ and for even $\ell \geq 4$ we have
\[
M_R(\ell) \leq \Big(\frac{3e^\gamma}{\pi^2} \log \ell + O(1) \Big)^{\ell}.
\] 
\end{theorem}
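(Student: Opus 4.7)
My plan is to mirror the proof of Theorem \ref{thm4.1}, replacing the discrete character average with the continuous average $\frac{1}{y}\int_0^y\,du$ and using the periodicity of $\prod_j\psi(u/n_j)$ in place of character orthogonality. Start from Montgomery's identity, which gives
\[
\widetilde{R}(u) = S(u) + O\bigl(\exp(-c\sqrt{\log u})\bigr), \quad S(u) := -\sum_{n\le u}\frac{\mu(n)\psi(u/n)}{n}.
\]
Combined with the crude bounds $\widetilde{R}(u), S(u) \ll \log u$ and the inequality $|A^\ell - B^\ell| \le \ell|A-B|\max(|A|,|B|)^{\ell-1}$, this shows that $\frac{1}{y}\int_0^y \widetilde{R}(u)^\ell\,du$ and $\frac{1}{y}\int_0^y S(u)^\ell\,du$ differ by $O(\ell(\log y)^{\ell-1}\exp(-c\sqrt{\log y}))$, which is absorbed in the stated error for $\ell$ in our range.

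Next, set $B = y^{1/(2\ell)}$ and truncate to $S_B(u) = -\sum_{n\le B}\mu(n)\psi(u/n)/n$. The main technical estimate is the continuous $L^2$ analog of Lemma \ref{lem2.3}:
\[
\frac{1}{y}\int_0^y |S(u) - S_B(u)|^2\,du \ll B^{-1+\epsilon}.
\]
Granted this, the Cauchy--Schwarz inequality together with $|S|,|S_B| \ll \log y$ replaces $S^\ell$ by $S_B^\ell$ with admissible error. Expanding $(S_B(u))^\ell$ and exploiting the periodicity of $\prod_j\psi(u/n_j)$ with period $L=\mathrm{lcm}(n_1,\ldots,n_\ell)$ yields
\[
\frac{1}{y}\int_0^y\prod_{j=1}^\ell\psi(u/n_j)\,du = \mathcal{B}(n_1,\ldots,n_\ell) + O(L\cdot 2^{-\ell}/y),
\]
whose total weighted error is $O(2^{-\ell}B^\ell/y) = O(y^{-1/2})$. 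Finally, extend the sum $(-1)^\ell\sum_{n_i\le B}\mu(n_i)/(n_i)\,\mathcal{B}(n_1,\ldots,n_\ell)$ to all $n_i$ by Rankin's trick with $\alpha = 1/(10\log\ell)$; using Proposition \ref{propB}(3) and the Euler product analysis from the proof of Theorem \ref{thm4.1}, the tail is bounded by $B^{-\alpha}(\log\ell)^\ell e^{O(\ell)}$, which for $\ell \le (c/9)\sqrt{\log y}/\log\log y$ is $\le \exp(-(c/8)\sqrt{\log y})$.

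The moment formulas then follow directly. Odd $M_R(\ell)$ vanishes by Proposition \ref{propB}(1). For $\ell = 2$, Proposition \ref{propB}(1) gives $\mathcal{B}(m,n) = (m,n)^2/(12mn)$, and a direct Euler product computation yields
\[
M_R(2) = \frac{1}{12}\sum_{m,n\ge 1}\frac{\mu(m)\mu(n)(m,n)^2}{m^2 n^2} = \frac{1}{12}\prod_p(1-1/p^2) = \frac{1}{2\pi^2}.
\]
For even $\ell \ge 4$, the bound $|\mathcal{B}| \le 2^{-\ell}/\mathrm{sf}(n_1\cdots n_\ell)$ from Proposition \ref{propB}(3) produces a local factor $1 + \ell/p^2 + \sum_{j\ge 2}\binom{\ell}{j}/p^j \le (1+1/p)^\ell$ at each prime; Mertens' theorem gives $\prod_{p\le\ell}(1+1/p) \sim (6e^\gamma/\pi^2)\log\ell$, the primes $p > \ell$ contribute an additional factor $e^{O(\ell/\log\ell)}$, and multiplying by $2^{-\ell}$ produces $(3e^\gamma\log\ell/\pi^2 + O(1))^\ell$ as claimed.

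The main obstacle is the $L^2$ truncation bound. Unlike in Lemma \ref{lem2.3}, there is no clean character orthogonality to exploit; the natural approach is to expand $|S - S_B|^2$, apply the Fourier expansion of $\psi$, identify the Fourier-diagonal contribution with the convergent tail $\sum_{\max(m,n) > B}\mu(m)\mu(n)(m,n)^2/(m^2 n^2)$ (which is $O(1/B)$ via the same Euler product as in the $M_R(2)$ computation), and handle the off-diagonal Fourier frequencies by careful summation exploiting oscillation in $\mu$.
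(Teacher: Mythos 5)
Your overall architecture (Montgomery's formula, truncation at $y^{1/(2\ell)}$, replacing $\frac1y\int_0^y\prod_j\psi(u/n_j)\,du$ by $\mathcal{B}(n_1,\dots,n_\ell)$ with error $O(\mathrm{lcm}/y)$, Rankin's trick to complete the sum, and the Euler-product computations giving $M_R(2)=1/2\pi^2$ and the bound $(\tfrac{3e^\gamma}{\pi^2}\log\ell+O(1))^\ell$) matches the paper, and those parts are fine. But the step you yourself flag as ``the main obstacle'' is a genuine gap, and at the strength you claim it is not fixable. The bound $\frac1y\int_0^y|S(u)-S_B(u)|^2\,du\ll B^{-1+\epsilon}$ with $B=y^{1/(2\ell)}$ has no analogue of the character-orthogonality mechanism of Lemma \ref{lem2.3}, and the reason is structural: the tail contains $n$ of size comparable to $y$ (with your definition $S(u)=-\sum_{n\le u}\mu(n)\psi(u/n)/n$, all the way up to $y$; even with Montgomery's truncation, up to $N=y\exp(-c\sqrt{\log y})$). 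For such $n$ the function $\psi(u/n)$ completes at most a bounded number of periods on $[0,y]$, so for typical coprime $m\asymp n\asymp y$ the correlation $\frac1y\int_0^y\psi(u/m)\psi(u/n)\,du$ is of size $\asymp 1$, not close to $\mathcal{B}(m,n)\asymp 1/(mn)$. Hence in the top ranges the only possible source of saving is cancellation in $\mu$ itself, i.e.\ bounds on sums like $\sum_{M<n\le 2M}\mu(n)n^{-1}f(n/y)$ with $M\asymp y$; unconditionally this gives only $\exp(-c\sqrt{\log M})$, whereas your claim with, say, $\ell=2$ demands a saving of $y^{-1/4+\epsilon}$ — far beyond any known zero-free region. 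So the plan ``handle the off-diagonal Fourier frequencies by careful summation exploiting oscillation in $\mu$'' cannot deliver $B^{-1+\epsilon}$, and the diagonal/off-diagonal dichotomy you propose does not isolate the real difficulty, which is the failure of the periodicity heuristic at scales $n$ near $y$.

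The paper's route around this is worth internalizing, because a much weaker truncation bound suffices. First, Montgomery's formula is invoked in the form $\widetilde{R}(u)=-\sum_{n\le N}\mu(n)\psi(u/n)/n+O(\exp(-c\sqrt{\log y}))$ with $N=y\exp(-c\sqrt{\log y})$, so no scale above $N$ ever enters. Then, instead of a single global $L^2$ estimate, the range $y^{1/(2\ell)}\le n\le N$ is cut into dyadic blocks and each block is handled in $L^1$ via Cauchy--Schwarz together with Lemma \ref{lem:mont}, which bounds $\sum_{M<n_1,n_2\le 2M}\bigl|\frac1y\int_0^y\psi(u/n_1)\psi(u/n_2)\,du\bigr|\ll(\log y)^2(M+M^{5/2}y^{-1/2})$ with absolute values throughout — no cancellation from $\mu$ is used at all; the saving comes from the smallness of $\mathcal{B}(n_1,n_2)$ on average plus a count of near-resonances $|k_1/n_1+k_2/n_2|<K/y$. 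Per block this yields an $L^2$ bound of only $(\log y)^2(M^{-1}+\sqrt{M/y})$, whose worst case at $M\asymp N$ is $\exp(-\tfrac{c}{2}\sqrt{\log y})$ — much weaker than $B^{-1+\epsilon}$, but after applying \eqref{identity} with the crude bound $\ll\log y$ on the other $\ell-1$ factors, this loses only $(\log y)^{\ell}\ll\exp(\tfrac{c}{8}\sqrt{\log y})$ and the stated error $O(\exp(-\tfrac{c}{8}\sqrt{\log y}))$ survives in the range $\ell\le\tfrac{c}{9}\sqrt{\log y}/\log\log y$. If you replace your claimed truncation lemma by this dyadic $L^1$ argument (or by any bound of quality $\exp(-c'\sqrt{\log y})$), the rest of your write-up goes through essentially as in the paper.
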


We begin with a lemma, which will allow us to truncate ${\widetilde R}(u)$ by a short sum of sawtooth functions.  

\begin{lemma}\label{lem:mont}   For all $1\le N\le y$ we have 
$$ 
\sum_{N < n_1, n_2 \le 2N } \Big| \frac 1y \int_0^y \psi(x/n_1) \psi(x/n_2) dx \Big| \ll (\log y)^2 \Big( N + N^2 \frac{\sqrt{N}}{\sqrt{y}}\Big). 
$$ 
 \end{lemma}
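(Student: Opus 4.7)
The plan is to expand $\psi$ using the Fejer approximation $\psi_K$ from Section \ref{sec:B} (with $K\asymp N$ to be chosen), split $\frac{1}{y}\int_0^y\psi(x/n_1)\psi(x/n_2)\,dx$ into a diagonal term matching $\mathcal{B}(n_1,n_2)$ and an off-diagonal error, and then sum over $(n_1,n_2)\in(N,2N]^2$. Using $|\psi_K|\le 1/2$ together with $\int_0^1\min(1,1/(K\Vert x\Vert))\,dx\ll(\log K)/K$, the identity $\psi_1\psi_2-\psi_{K,1}\psi_{K,2}=(\psi_1-\psi_{K,1})\psi_2+\psi_{K,1}(\psi_2-\psi_{K,2})$ shows that replacing $\psi$ by $\psi_K$ introduces a contribution per pair of size $\ll(\log K)/K$ in absolute value, totalling $\ll N^2(\log K)/K$ after summation.

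Fourier expanding, with $w_k=1-|k|/(K+1)$ and $\alpha=k_1/n_1+k_2/n_2$,
\begin{equation*}
\frac{1}{y}\int_0^y\psi_K(x/n_1)\psi_K(x/n_2)\,dx = -\frac{1}{4\pi^2}\sum_{0<|k_i|\le K}\frac{w_{k_1}w_{k_2}}{k_1k_2}\cdot\frac{1}{y}\int_0^y e(x\alpha)\,dx.
\end{equation*}
The diagonal terms $\alpha=0$ assemble (as in the proof of Proposition \ref{propB}) into a Fejer-weighted version of $\mathcal{B}(n_1,n_2)=(n_1,n_2)^2/(12n_1n_2)$; a gcd decomposition $n_j=d a_j$, $(a_1,a_2)=1$, with the pair count $\#\{\gcd=d\}\le(N/d)^2$, yields $\sum_{N<n_1,n_2\le 2N}|\mathcal{B}(n_1,n_2)|\ll N$. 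For the off-diagonal terms ($\alpha\ne 0$), use $|\frac{1}{y}\int_0^y e(x\alpha)dx|\le\min(1,1/(\pi y|\alpha|))$ and $|\alpha|=|k_1n_2+k_2n_1|/(n_1n_2)$, with $n_1 n_2\le 4N^2$.

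The main work is bounding the off-diagonal sum. Fix $(k_1,k_2,n_1)$ and let $m=k_1n_2+k_2n_1$; as $n_2$ varies over $(N,2N]$, $m$ traverses an arithmetic progression of length $N$ with common difference $|k_1|$. A dyadic decomposition over the bands $|m|\asymp M$ with $M\ge N^2/y$, controlling the AP-count by $2M/|k_1|+1$, yields
\begin{equation*}
\sum_{n_2\in(N,2N]}\min\Bigl(1,\tfrac{4N^2}{y|m|}\Bigr) \ll 1 + \frac{N^2\log y}{y|k_1|}.
\end{equation*}
Summing over $n_1$ (factor $N$) and then over $(k_1,k_2)$ weighted by $1/|k_1k_2|$, using $\sum_{0<|k|\le K}1/k^2\ll 1$ and $\sum 1/|k|\ll\log K$, gives $\ll N(\log K)^2+N^3(\log y)(\log K)/y$. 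Choosing $K\asymp N$ (so $\log K\ll\log y$ and the truncation error $N^2(\log K)/K$ absorbs into $N\log y$), and using $N\le y$ to convert $N^3/y\le N^{5/2}/\sqrt y$, the total becomes $\ll(\log y)^2(N+N^{5/2}/\sqrt y)$, as required.

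The main obstacle is the off-diagonal counting: one must dyadically decompose in $|m|=|k_1n_2+k_2n_1|$, balance the flat bound $1$ against the oscillation bound $4N^2/(yM)$ in each band, and assemble these across all $(k_1,k_2,n_1)$ while keeping the weight $1/|k_1k_2|$ summable; only with this care do the factors combine into the precise $N+N^{5/2}/\sqrt y$ shape rather than the trivial $N^4/y$ partial-period bound.
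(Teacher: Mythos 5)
Your proof is correct and follows essentially the same route as the paper: approximate $\psi$ by the Fej\'er-smoothed $\psi_K$, expand in Fourier series, and count the near-resonances where $|k_1n_2+k_2n_1|$ is small; the only differences are your choice $K\asymp N$ with a dyadic decomposition in $|m|=|k_1n_2+k_2n_1|$ (plus a separate gcd treatment of the $m=0$ terms), versus the paper's $K\asymp \sqrt{y/N}$ with a single threshold split at $|k_1/n_1+k_2/n_2|=K/y$. Your bookkeeping in fact gives the marginally stronger bound $\ll (\log y)^2\bigl(N+N^3/y\bigr)$, which implies the stated estimate since $N\le y$.
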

\begin{proof}  Let $K \geq 2$ be a parameter to be chosen shortly, and let $\psi_K(x)$ be as in \eqref{Fex3}.  First note that 
$$ 
\frac 1y \int_0^y |\psi(x/n_1)\psi(x/n_2) - \psi_K(x/n_1)\psi_K(x/n_2)| dx 
\le \frac 1y \int_0^y \sum_{j=1}^{2} |\psi(x/n_j)-\psi_K(x/n_j)|  dx \ll \frac{1}{K}, 
$$ 
upon using \eqref{Fex5}, and since $n_1$ and $n_2$ are at most $N\le y$.   
Next, from the Fourier expansion of $\psi_K$ (see \eqref{Fex3}) it follows 
that 
\begin{align*}
\frac{1}{y}\Big| \int_0^y \psi_K(x/n_1) \psi_K(x/n_2) dx \Big| 
&\ll \sum_{0< |k_1|, |k_2| \le K} \frac{1}{|k_1k_2|} \Big|\frac 1y \int_0^y e\Big(x \Big(\frac{k_1}{n_1} + \frac{k_2}{n_2}\Big) \Big) dx \Big| 
\\
&\ll \sum_{0< |k_1|, |k_2| \le K} \frac{1}{|k_1k_2|} \min \Big( 1, \frac{1}{y|k_1/n_1+k_2/n_2|}\Big). 
\end{align*} 
From these two estimates it follows that the sum to be bounded is 
$$ 
\ll \frac{N^2}{K} + \sum_{0< |k_1|, |k_2| \le K} \frac{1}{|k_1 k_2|} \sum_{N < n_1, n_2 \le 2N} \min \Big( 1, \frac{1}{y|k_1/n_1 + k_2/n_2|}\Big). 
$$ 

To estimate the sum above, we split the terms into two groups: those with $|k_1/n_1+k_2/n_2| \ge K/y$ and those terms with 
$|k_1/n_1+k_2/n_2| < K/y$.  The first group contributes 
$$ 
\ll \frac {N^2}K \sum_{0< |k_1|, |k_2| \le K} \frac{1}{|k_1 k_2|}  \frac{1}{|k_1 k_2|} \ll \frac{N^2}{K} (\log K)^2. 
$$ 
Terms in the second group only exist for $k_1$ and $k_2$ of opposite sign, and here $|k_1n_2 + k_2 n_1| \ll KN^2/y$, so that 
if $k_1$, $n_1$, and $k_2$ are fixed, then $n_2$ has $\ll 1 + KN^2/y$ choices.  Therefore the second group contributes 
$$
\ll \Big(1 + \frac{KN^2}{y}\Big) N \sum_{0< |k_1|, |k_2| \le K} \frac{1}{|k_1 k_2|} \ll  (\log K)^2 N \Big(1+ \frac{KN^2}{y}\Big). 
$$ 
Choosing $K= 2\lceil \sqrt{y/N} \rceil$, the lemma follows.  
\end{proof}

\begin{proof}[Proof of Theorem \ref{thm:R-moments}]   From Theorem 1 and Lemma 1 of \cite{Montgomery} (but beware of the 
changes in notation, especially that his saw tooth function differs from ours in sign)  it follows that with $N= y\exp(-c\sqrt{\log y})$ 
for a suitable positive constant $c<1$, one has 
$$ 
{\widetilde R}(u) = -\sum_{n\le N} \frac{\mu(n)}{n} \psi(u/n)  + O(\exp(-c\sqrt{\log y})), 
$$ 
for all $N\le u \le y$.   Since ${\widetilde R}(u)$ and the sum over $n$ above are $\ll \log y$, it follows that for $\ell \le 
\frac c9 \sqrt{\log y}/\log \log y$ 
\begin{equation} 
\label{7.1} 
\frac 1y \int_0^y {\widetilde R}(u)^{\ell} du = \frac{(-1)^{\ell}}{y} \int_0^y \Big( \sum_{n\le N} \frac{\mu(n)}{n}\psi(u/n)\Big)^{\ell} du + 
O(\exp(-\tfrac c2 \sqrt{\log y} ) ). 
\end{equation} 
Now applying \eqref{identity} we see that 
\begin{align}  
\label{7.2} 
\frac{1}{y} \int_0^y  \Big( \sum_{n\le N} \frac{\mu(n)}{n}\psi(u/n)\Big)^{\ell} du 
&= \frac 1y \int_0^y  \Big( \sum_{n\le y^{1/(2\ell)}} \frac{\mu(n)}{n}\psi(u/n)\Big)^{\ell} du \nonumber\\ 
&+ 
O \Big( \frac{\ell (\log y)^{\ell-1}}{y} \int_0^y \Big| \sum_{y^{1/(2\ell)} \le n \le N} \frac{\mu(n)}{n} \psi(u/n) \Big| du \Big). 
\end{align} 
 
 Expanding out, the main term in \eqref{7.2} is 
\begin{align*}
& \sum_{n_1, \ldots, n_\ell \le y^{1/(2\ell)}} \frac{\mu(n_1) \cdots \mu(n_\ell)}{n_1\cdots n_\ell} \frac 1y \int_{0}^y \prod_{j=1}^{\ell} \psi(u/n_j) du \\
 = & \sum_{n_1, \ldots, n_\ell \le y^{1/(2\ell)}} \frac{\mu(n_1) \cdots \mu(n_\ell)}{n_1\cdots n_\ell} ({\mathcal B}(n_1,\ldots, n_\ell) + O(n_1\cdots n_\ell)). 
 \end{align*}
 Arguing as in the proof of Theorem \ref{thm4.1}, this may be seen to equal $M_R(\ell) + O(y^{-1/(40\ell \log \ell)})$. 
 
 As for the remainder term in \eqref{7.2}, splitting the terms $y^{1/(2\ell)} \le n\le N$ into dyadic blocks, we 
 may bound this by 
 $$ 
 \ll \exp(\tfrac c8\sqrt{\log y}) \max_{\substack{ y^{1/(2\ell)} \le M \le N \\ I \subset [M,2M]} } 
 \frac{1}{y} \int_0^y \Big| \sum_{n\in I} \frac{\mu(n)}{n} \psi(u/n) \Big| du, 
 $$ 
 where the maximum is over subintervals $I$ of $[M,2M]$.   By Cauchy-Schwarz and Lemma \ref{lem:mont}, this is 
 $$ 
 \ll  \exp(\tfrac c8\sqrt{\log y}) \max_{\substack{ y^{1/(2\ell)} \le M \le N \\ I \subset [M,2M]} } (\log y) \Big( \frac{1}{M} + \frac{\sqrt{M}}{\sqrt{y}}\Big)^{\frac 12} 
 \ll \exp(-\tfrac c8 \sqrt{\log y}). 
 $$ 
This justifies the first claim of the theorem.  It is also clear that $M_R(\ell) =0$ for odd $\ell$, and the formula for $M_R(2)$ follows from 
our knowledge of ${\mathcal B}(n_1,n_2)$.  Lastly, the claimed upper bound on $M_R(\ell)$  follows exactly as the upper bound for $M_C(\ell)$ in Theorem \ref{thm4.1}. 
 \end{proof}
 
\bibliographystyle{abbrv}
\bibliography{distribution}

\end{document}